\documentclass[11pt]{article}
%DIF LATEXDIFF DIFFERENCE FILE

%DIF 2c2
%DIF < \usepackage{amsmath, amssymb, amsthm, verbatim,enumerate,bbm, color}
%DIF -------
\usepackage{amsmath, amssymb, amsthm, verbatim,enumerate,enumitem,bbm,color} %DIF >
%DIF -------
\usepackage{indentfirst}

\title{On MAXCUT in strictly supercritical random graphs, and coloring  of random graphs and random tournaments}

\author{Lior Gishboliner \thanks{School of Mathematical Sciences, Raymond and Beverly Sackler Faculty of Exact Sciences, Tel Aviv University, Tel Aviv, 6997801, Israel. Email: liorgis1@post.tau.ac.il. Research supported in part by ISF Grant 224/11 and ERC-starting Grant 633509.} \and Michael Krivelevich \thanks{School of Mathematical Sciences,
		Raymond and Beverly Sackler Faculty of Exact Sciences, Tel Aviv
		University, Tel Aviv, 6997801, Israel. Email:
		krivelev@post.tau.ac.il. Research supported in part by USA-Israel BSF Grant 2014361 and by
		grant 912/12 from the Israel Science Foundation.} 
	\and Gal Kronenberg
	\thanks{School of Mathematical Sciences, Raymond and Beverly Sackler Faculty of Exact Sciences, Tel Aviv University, Tel Aviv, 6997801, Israel. Email: galkrone@mail.tau.ac.il.}}

\date{\today}
\parindent 5mm
\parskip 0.2mm
\oddsidemargin  0pt \evensidemargin 0pt \marginparwidth 0pt
\marginparsep 0pt \topmargin 0pt \headsep 0pt \textheight 8.8in
\textwidth 6.6in

\allowdisplaybreaks

\theoremstyle{plain}
\newtheorem{theorem}{Theorem}[section]
\newtheorem{lemma}[theorem]{Lemma}
\newtheorem{claim}[theorem]{Claim}

\newtheorem{observation}[theorem]{Observation}
\newtheorem{corollary}[theorem]{Corollary}
\newtheorem{conjecture}[theorem]{Conjecture}

\newtheorem{remark}[theorem]{Remark}
\newtheorem{definition}[theorem]{Definition}

\newcommand{\Bin}{\ensuremath{\textrm{Bin}}}

\newcommand{\whp}{w.h.p.\ }
\newcommand{\Whp}{W.h.p.\ }

%Hypergraphs

%Letters

\newcommand{\C}{\mathcal C}

\newcommand{\K}{\mathcal K}
\newcommand{\dist}{\mathrm{Dist}_{\text{BP}}}
\newcommand{\distT}{\mathrm{Dist}_{\text{Tour-BP}}}

\newcommand{\arc}{\overrightarrow}

%DIF PREAMBLE EXTENSION ADDED BY LATEXDIFF
%DIF UNDERLINE PREAMBLE %DIF PREAMBLE
\RequirePackage[normalem]{ulem} %DIF PREAMBLE
\RequirePackage{color}\definecolor{RED}{rgb}{1,0,0}\definecolor{BLUE}{rgb}{0,0,1} %DIF PREAMBLE
 %DIF PREAMBLE
                      %DIF PREAMBLE
%DIF SAFE PREAMBLE %DIF PREAMBLE
 %DIF PREAMBLE
 %DIF PREAMBLE
 %DIF PREAMBLE
 %DIF PREAMBLE
%DIF FLOATSAFE PREAMBLE %DIF PREAMBLE
 %DIF PREAMBLE
 %DIF PREAMBLE
 %DIF PREAMBLE
 %DIF PREAMBLE
 %DIF PREAMBLE
 %DIF PREAMBLE
%DIF END PREAMBLE EXTENSION ADDED BY LATEXDIFF

\begin{document}
\maketitle

\begin{abstract}
		We use a theorem by Ding, Lubetzky and Peres describing the structure of the giant component of random graphs in the strictly supercritical regime, in order to determine the typical size of MAXCUT of $G\sim G\left(n,\frac {1+\varepsilon}n\right)$ in terms of $\varepsilon$.  We then apply this result to prove the following conjecture by Frieze and Pegden. For every $\varepsilon>0$ there exists $\ell_\varepsilon$ such that \whp $G\sim G(n,\frac {1+\varepsilon}n)$ is not homomorphic to the cycle on $2\ell_\varepsilon+1$ vertices. We also consider the coloring properties of biased random tournaments. A $p$-random tournament on $n$ vertices is obtained from the transitive tournament by reversing each edge independently with probability $p$. We show that for $p=\Theta(\frac 1n)$ the chromatic number of a $p$-random tournament behaves similarly to that of a random graph with the same edge probability. To treat the case $p=\frac {1+\varepsilon}n$ we use the aforementioned result on MAXCUT and show that in fact \whp one needs to reverse $\Theta(\varepsilon^3)n$ edges to make it 2-colorable.
\end{abstract}

\section{Introduction}

Given a graph $G$, a {\em bipartition} of $G$ is a partition of $V(G)$ into two sets, $V(G) = V_1 \uplus V_2$. The {\em cut} of the partition $(V_1,V_2)$ is the set of edges with one end-point in each $V_i$. The MAXCUT problem asks to find the size of a largest cut in $G$. We denote this number by $\text{MAXCUT}(G)$. The problem of finding $\text{MAXCUT}(G)$ has been extensively studied. It is known to be very important in both combinatorics and theoretical computer science, and has some connections to physics.  The MAXCUT problem is known to be NP-hard (see \cite{GJS,Trevisan2000}) and even not approximable to within a factor of $\frac{16}{17}$ unless $P = NP$ (see \cite{MAXCutIsNPH}). On the other hand, as shown by Goemans and Williamson \cite{GoeWill95}, there is a semidefinite programming algorithm that approximates MAXCUT to a factor of 0.87856. Moreover, for dense graphs there are polynomial time approximation
schemes for $\text{MAXCUT}(G)$, which approximate it up to an additive factor of $o(n^2)$, as shown by Arora, Karger and Karpinski in \cite{DenseMaxCut1}, and by Frieze and Kannan in \cite{DenseMaxCut2}.

One natural generalization of the MAXCUT problem is the MAX $k$-CUT problem that asks an analogous question about $k$-partitions of a graph. A $k$-partition of $G$ is a partition of $V(G)$ into $k$ sets, 
$V(G) = V_1 \uplus \dots \uplus V_k$.  The $k$-cut of the partition $(V_1,\dots,V_k)$ is the set of edges connecting vertices in different parts. The MAX $k$-CUT problem asks to find the size of a largest $k$-cut in $G$. Extending the methods of \cite{GoeWill95}, Frieze and Jerrum found in \cite{FriezeJerrum} an algorithm that approximates MAX $k$-CUT up to a known constant factor $\alpha_k < 1$. 

For any graph $G = (V,E)$, finding $\text{MAXCUT}(G)$ is clearly equivalent to finding the maximal number of edges in a bipartite subgraph of $G$. The {\em distance of $G$ from bipartiteness} is defined to be the minimal number of edges whose removal turns $G$ into a bipartite graph. We denote the distance by $\dist(G)$. Obviously,
$\dist(G) = |E| - \text{MAXCUT}(G)$. So asking for the size of a maximum cut in $G$ is equivalent to asking for the distance of $G$ from bipartiteness. 
%In general, it is often convenient to consider the {\em fraction} of edges that need to be removed in order to make $G$ satisfy some graph property (in our case bipartiteness). We say that a graph $G$ on $n$ vertices is $\delta$-far from satisfying a graph property $\mathcal P$ if its distance from $\mathcal P$ is at least $\delta |E(G)|$. Otherwise, we say that $G$ is $\delta$-close to $\mathcal P$.

In this paper we consider the maximum cut in a random graph. We work in the binomial random graph model $G(n,p)$. This is the probability space that consists of all graphs with $n$ labeled vertices, where each one of the $\binom{n}{2}$ possible edges is included independently with probability $p$. The study of the typical value of MAXCUT in the model $G(n,p)$ has a long history. For starters, if $p$ is not too small, say 
$p \gg n^{-1/2}\log n$, then one can check (using simple probabilistic tools) that the size of the MAXCUT of a typical $G\sim G(n,p)$ is 
$\frac {n^2p}4(1+o(1))$. The problem is more interesting for smaller $p$, specifically $p = \Theta\left( \frac{1}{n} \right)$. It was shown in \cite{Feige} that a random graph $G(n,p)$ with $p = \frac cn$ (for a large enough constant $c$), does not have a cut with significantly more than half the edges. In 2013, Bayati, Gamarnik and Tetali \cite{BGT} proved that for any $c > 0$, the random variable 
$n^{-1}\cdot \text{MAXCUT}\left( G\left( n, \frac{c}{n} \right) \right)$ converges in probability to a single value $MC(c)$ (as $n$ tends to infinity). They also established a similar result for the random regular graph model $G_{n,r}$. Asymptotic bounds on $MC(c)$ were obtained by Coppersmith et al. (see \cite{MAXCUT}), Gamarnik and Li (see \cite{GL}) and Feige and Ofek (see \cite{FeigeOfek}). All these bounds are of the form 
$\frac{c}{4} + \alpha \sqrt{c} + o(\sqrt{c}) 
\leq MC(c) \leq \frac{c}{4} + \beta \sqrt{c} + o(\sqrt{c})$, where $\alpha,\beta$ are known absolute constants and the little-$o$ notation is with respect to $c$. Recently, Dembo, Montanari and Sen found the correct asymptotic behavior of $MC(c)$ up to an error of $o(\sqrt{c})$; they proved that 
$MC(c) = \frac{c}{4} + \gamma \sqrt{\frac{c}{4}} + o(\sqrt{c})$, where $\gamma \approx 0.7632$. They also obtained a similar result for random regular graphs (see \cite{DMS}). 

The above results cover the case when $c$ is large. We, however, focus on the range around the phase transition value $p= \frac{1}{n}$. It is known that the typical structure of $G\left( n, p \right)$ changes significantly as $p$ increases above this value; the giant component appears together with other graph properties. $\text{MAXCUT}$ also has a phase transition at $p = \frac{1}{n}$. For 
$G \sim G\left( n, \frac{c}{n} \right)$, the value of  
$\dist(G) = |E(G)| -  \text{MAXCUT}(G)$ is $O(1)$ in expectation if $c < 1$, and typically $\Omega(n)$ if $c > 1$. Indeed, if $c < 1$ then \whp every connected component in $G \sim G\left( n, \frac{c}{n} \right)$ is either a tree or unicyclic, and the number of unicyclic components has a Poisson limiting distribution with an expected value of $O(1)$ (see Section 5.4 in \cite{Bol98}). This means that in expectation, the distance of $G$ from bipartiteness, which is at most the number of cycles, is $O(1)$. For $c > 1$, a typical $G \sim G\left( n, \frac{c}{n} \right)$ contains a complex giant connected component whose $2$-core is of linear size in $n$ (see Section 5.4 in \cite{JLR}). 

 Coppersmith, Gamarnik, Hajiaghayi and Sorkin \cite{MAXCUT} considered the regime 
$p = \frac{1 + \varepsilon}{n}$ for a fixed $\varepsilon > 0$, and showed that a typical 
$G\sim G(n,p)$ satisfies
$\dist(G) = \Omega\left(\frac {\varepsilon^3}{\log (1/\varepsilon)} \right)n$. They also conjectured that the dependence on $\varepsilon$ can be improved to $\Theta(\varepsilon^3)$. Moreover, they showed that the expectation of $\dist(G)$ is $O(\varepsilon^3)n$. We prove that this conjecture is indeed true.
\begin{theorem}\label{thm:main2}
	Let $\varepsilon \in (0,1)$ and let $G \sim G\left( n, \frac{1+\varepsilon}{n}  \right)$. Then \whp $\dist(G) = \Theta (\varepsilon^3)n$.  
\end{theorem}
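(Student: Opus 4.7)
My plan is to reduce $\dist(G)$ to the 2-core of $G$ and apply the structural theorem of Ding, Lubetzky and Peres (DLP) describing this 2-core in the strictly supercritical regime. Since removing a degree-one vertex preserves the distance from bipartiteness, $\dist(G)=\dist(\mathcal{C})$, where $\mathcal{C}$ is the 2-core of $G$; the 2-core components outside the giant contribute only $O(1)$ in expectation. By DLP, $\mathcal{C}$ is contiguous to the model $\widetilde{\mathcal{C}}$ obtained as follows: take a random multigraph $K$ --- the \emph{kernel} --- on $N_K=\Theta(\varepsilon^3) n$ vertices with $\Theta(\varepsilon^3 n)$ edges and minimum degree at least $3$; then subdivide each edge $uv$ of $K$ independently into a path $P_{uv}$ whose length $L_{uv}$ is approximately geometric with mean $\Theta(1/\varepsilon)$.

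\textbf{Reduction to a signed-kernel problem.} The key identity is
\[
\dist(\widetilde{\mathcal{C}})\;=\;\min_{c\,:\,V(K)\to\{0,1\}}\;\bigl|\bigl\{uv\in E(K):\, c(u)\oplus c(v)\not\equiv L_{uv}\pmod{2}\bigr\}\bigr|,
\]
which follows since, once the endpoint colors on a subdivision path are fixed, the alternating coloring of the interior contributes $0$ monochromatic edges when the parity of $c(u)\oplus c(v)$ matches $L_{uv}$, and any interior coloring contributes at least one otherwise. Setting $\sigma(uv):=L_{uv}\bmod 2$, this is the minimum frustration of the signed multigraph $(K,\sigma)$. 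The upper bound is immediate: plugging in $c\equiv 0$ gives $\dist(G)\leq |E(K)|=O(\varepsilon^3) n$ whp; no separate concentration step is needed.

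\textbf{Lower bound via Chernoff and a union bound.} Since each $L_{uv}$ is roughly geometric with mean $\Theta(1/\varepsilon)\gg 1$, the bits $\sigma(uv)$ are, conditional on $K$, i.i.d.\ Bernoulli$(\tfrac{1}{2}+o(1))$. For a fixed $c\in\{0,1\}^{V(K)}$, the number of violated constraints is essentially $\mathrm{Bin}(|E(K)|,\tfrac{1}{2})$, and Chernoff gives
\[
\Pr\bigl[\,\text{\#violations}<\alpha|E(K)|\,\bigr]\;\leq\;\exp\!\bigl(-(\ln 2-H(\alpha))\,|E(K)|\bigr),
\]
where $H$ is binary entropy. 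The minimum-degree-three property forces $|V(K)|\leq \tfrac{2}{3}|E(K)|$, so the union bound over all $2^{|V(K)|}$ colorings is dominated by $\exp\bigl(-(\ln 2-H(\alpha)-\tfrac{2}{3}\ln 2)\,|E(K)|\bigr)$, which is $\exp(-\Omega(\varepsilon^3 n))$ for any fixed $\alpha>0$ with $H(\alpha)<\tfrac{1}{3}\ln 2$ (e.g.\ $\alpha=0.05$). This yields $\dist(G)\geq \alpha|E(K)|=\Omega(\varepsilon^3 n)$ whp.

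\textbf{Main obstacle.} The delicate part is the lower bound. I expect the real work to be (i) transferring the whp statement from $\widetilde{\mathcal{C}}$ to the true 2-core $\mathcal{C}$ via DLP's contiguity and dealing with any conditioning on the total 2-core size; (ii) justifying rigorously that the parity bits $\sigma$ are close enough to i.i.d.\ uniform and approximately independent of $K$; and (iii) verifying that the min-degree-three bound $|E(K)|/|V(K)|\geq \tfrac{3}{2}$ really produces a Chernoff exponent strictly exceeding the $\ln 2$ entropy of the 2-coloring search space, with uniform constants as $\varepsilon\to 0$. All the other steps---the 2-core reduction, the interior-path optimization, and the explicit upper bound---are direct consequences of the DLP kernel-and-paths description.
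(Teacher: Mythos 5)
Your proposal is correct and follows essentially the same route as the paper: both reduce to the DLP kernel-and-paths model, both exploit $e(\mathcal{K})\ge \tfrac{3}{2}v(\mathcal{K})$ (min degree $3$) to make the union bound over $2^{v(\mathcal{K})}$ kernel colorings beat the per-coloring Chernoff exponent, and both get the upper bound from the observation that removing one edge per odd-length subdivision path (equivalently, counting parity violations of a constant coloring) makes $\widetilde{\mathcal{C}}$ bipartite. Your "signed-kernel frustration" framing is exactly the paper's "bad edge" count in different language, and the pieces you flag as needing care (the size of $\mathcal{K}$, stochastic domination by a Binomial with success probability bounded away from $\tfrac{1}{2}$, transferring from $\widetilde{\mathcal{C}}$ to $\mathcal{C}$ to $G$) are precisely the routine verifications the paper carries out.
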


% The problem of MAXCUT has many application in different problems (see \cite{MaxCutBook}). Here we will show two applications to random graphs and tournaments.  

%	From now on we will focus on finding the typical distance from bipartitness of the random graph $G(n,p)$. As mentioned before, this is equivalent to finding the typical size of the maximal cut in this graph (conditioning on the number of edges in $G$). 
% The problem of finding the minimal $\delta$ for which a typical $G\sim G(n,p)$ is $\delta$-far from being bipartite has a long history and much more in known on the typical behavior. If $p$ is not too small, one can check  (using simple probabilistic tools) that the size of the MAXCUT of a typical $G\sim G(n,p)$ is $\frac {n^2p}4(1+o(1))$.
The regime $p = \frac{c}{n}$ for $c > 1$, which is considered in this paper, is called the {\em strictly supercritical regime}. We note that the problem of finding the typical distance to bipartiteness has also been considered in the following regimes: the strictly subcritical regime $p = \frac{c}{n}$ for $c < 1$; the subcritical regime
$p = \frac{1 - \mu}{n}$ for $n^{-1/3}\ll\mu\ll 1$; the scaling window 
$p = \frac{1 \pm \mu}{n}$ for 
$\mu = \Theta\left( n^{-1/3} \right)$; the supercritical regime $p=\frac {1+\mu}n$ for $n^{-1/3}\ll\mu\ll 1$. For all these regimes, Daud\'e, Mart\'inez,  Rasendrahasina and Ravelomanana found the limit distribution of a normalization of $\dist(G)$ (see \cite{MaxCutSupercritical} for the details). For the subcritical regime and the scaling window, Scott and Sorkin \cite{ScottSorkin} showed that a maximum cut of a random graph can be found in linear expected time by a simple algorithm.

We note that the MAX $k$-CUT problem was also studied in several models of random graphs, such as $G(n,p)$, $G(n,m)$, and the random regular graph model $G_{n,r}$ (see, \cite{Bertone, MAXCUT,Coja,Kala}).

% For the strictly supercritical phase (that is, $p=\frac {1+\varepsilon}n$ where $\varepsilon>0$ is fixed), the problem has been considered by Coppersmith, Gamarnik, Hajiaghayi and Sorkin.  In their paper \cite{MAXCUT}, they proved that for a fixed $\varepsilon>0$, a typical $G\sim G(n,\frac {1+\varepsilon}n)$ is $\delta$-far for being bipartite for $\delta = O(\frac {\varepsilon^3}{\log (1/\varepsilon)})$. They also conjectured that their result can be improved to $\delta = \Theta(\varepsilon^3)$ (which was shown in \cite{MAXCUT} to be a lower bound). We prove that this is indeed correct.

The key ingredient in the proof of Theorem \ref{thm:main2} is the characterization of the giant component of random graphs in the strictly supercritical regime, obtained by Ding, Lubetzky and Peres in \cite{DLP}. 
 
We use Theorem \ref{thm:main2} to address two supposedly unrelated problems.  
The first problem is related to the chromatic number of random graphs and homomorphisms of random graphs. We first give relevant definitions for general graphs. A graph $G$ is called $k$-colorable if there exists a coloring $c:V(G)\to [k]=\{1,2,\dots,k\}$ such that if $v\sim u$ then $c(v)\neq c(u)$. The \textit{chromatic number} of a graph $G$, denoted by $\chi(G)$, is the minimal $k$ for which $G$ is $k$-colorable.  A \textit{graph homomorphism} $\varphi$ from a graph $G=(V,E)$ to a graph $G'=(V',E')$, is a (not necessarily injective) mapping $\varphi:V\to V'$ from the vertex set of $G$ to the vertex set of $G'$ such that if $\{v,u\}\in E(G)$ then $\{\varphi(v),\varphi(u)\}\in E(G')$. We simply denote this mapping by $\varphi:G \to G'$. By the definition, we can see that a graph $G$ is $k$-colorable if and only if there exists a homomorphism $\varphi: G\to K_k$, where $K_k$ is the complete graph on $k$ vertices. For a graph $G$ and an integer $\ell\geq 1$, a homomorphism $\varphi: G\to C_{2\ell +1}$ implies a homomorphism $\varphi':G\to C_{2k+1}$ for every $k\in [\ell] = \{1,...,\ell\}$, where $C_r$ is the cycle with $r$ vertices. In particular, a graph $G$ is $3$-colorable if there exists a homomorphism $\varphi:G\to C_{2\ell +1}$ for some integer $\ell\geq 1$ (as this implies a homomorphism $\varphi':G\to C_3$). The opposite direction is not always true. For example, the graph $G=C_3$ is 3-colorable but is not homomorphic to any $C_{2\ell+1}$ for $\ell\geq 2$. Thus, by considering homomorphisms from a graph $G$ to $C_{2\ell+1}$, we can measure how ``strong" the property of 3-colorability is in a given graph $G$.   

Here we will consider this notion in the random graph model $G(n,p)$ (see, e.g., Chapter 7 of \cite{JLR} for a detailed overview of coloring properties of random graphs). In the case of random graphs, it is known that for $p= \frac cn$ where $c>1$ we have that \whp $G\sim G(n,p)$ is not 2-colorable, that is, \whp $\chi(G)\geq 3$ (see, e.g., \cite{Bol98,JLR}).
 In their paper \cite{Between23}, Frieze and Pegden proved the following.

\begin{theorem}\label{thm:Between23}
 For any $\ell > 1$, there is an $\varepsilon > 0$ such that with high probability, $G\sim G(n, \frac {1+\varepsilon}n)$
	either has odd-girth $< 2\ell + 1$ or has a homomorphism to $C_{2\ell+1}$ (the cycle of length $2\ell+1$).
\end{theorem}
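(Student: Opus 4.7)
The plan is to prove Theorem \ref{thm:Between23} by constructing an explicit homomorphism $G \to C_{2\ell+1}$ whenever $G$ has odd-girth at least $2\ell+1$ (the opposite case is already one of the two alternatives in the theorem). The main structural input would be the Ding--Lubetzky--Peres \cite{DLP} description of the giant component: for $\varepsilon = \varepsilon(\ell)>0$ chosen sufficiently small, with high probability the 2-core of the giant is, up to lower order terms, a subdivision of a kernel multigraph $K$ with $\Theta(\varepsilon^3 n)$ cubic vertices, in which every edge $e$ of $K$ is replaced by a path of length $L_e$ distributed roughly as a geometric random variable with parameter $\varepsilon$. Components outside the giant are \whp trees or unicyclic; trees map into any edge of $C_{2\ell+1}$, and in a unicyclic component the unique cycle is either even or odd of length at least $2\ell+1$ (by the odd-girth hypothesis), hence realizable as a closed walk of matching length on $C_{2\ell+1}$, with pendant trees extended greedily. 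Pendant trees attached to the 2-core of the giant are handled the same way, extending $\phi$ outward from any map defined on the 2-core.

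For the 2-core I would classify edges of $K$ as \emph{short} if $L_e < 2\ell$ and \emph{long} otherwise, then pick a spanning tree $T$ of $K$ containing as many short edges as possible, define $\phi:V(K)\to\mathbb{Z}/(2\ell+1)$ by $\phi(r)=0$ at a fixed root $r$ and $\phi(v)=d_T(r,v)\pmod{2\ell+1}$, and realize each tree path in $G$ by the straight walk that advances by $+1$ at every step on $C_{2\ell+1}$. For each non-tree edge $e=(u,v)$ of $K$ one needs a walk of length $L_e$ on $C_{2\ell+1}$ from $\phi(u)$ to $\phi(v)$, which exists whenever $L_e \geq 2\ell$: the two distances from $\phi(u)$ to $\phi(v)$ on $C_{2\ell+1}$ have opposite parities and sum to $2\ell+1$, so one of them has the correct parity and is at most $2\ell$, and any remaining slack is absorbed by back-and-forth moves. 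If all short edges lie in $T$, the construction therefore succeeds.

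The main obstacle is that the short edges of $K$ need not form a forest. Since $K$ behaves like a random cubic multigraph on $\Theta(\varepsilon^3 n)$ vertices and each edge is short with probability about $2\ell\varepsilon$, the expected number of all-short cycles in $K$ of any bounded length is a small but strictly positive constant, so short-edge cycles appear with non-negligible probability. The remedy is the odd-girth hypothesis: any cycle $C$ of $K$ corresponds to a cycle in $G$ of length $L_C=\sum_{e\in C}L_e$ which is either even or at least $2\ell+1$. Reformulating the existence of $\phi$ as a constraint system on signed integer displacements $\{d_e\}$ satisfying $|d_e|\leq L_e$, $d_e\equiv L_e\pmod 2$, and $\sum_{e\in C}\sigma_C(e)d_e\equiv 0\pmod{2\ell+1}$ for every cycle $C$, the odd-girth bound provides exactly the slack needed to solve each cycle constraint (in the odd case, $\sum d_e=\pm(2\ell+1)$ fits inside $[-L_C,L_C]$; in the even case, $\sum d_e=0$ does). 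The technical bottleneck, and the hardest step to make rigorous, is verifying that these local solvabilities combine globally across the cycle space of $K$, which I expect to follow from a fundamental-cycle-basis argument anchored at a spanning tree chosen to capture as many short edges as possible, together with the abundant extra length of typical (long) subdivisions.
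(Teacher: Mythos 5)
\noindent The paper does not prove Theorem \ref{thm:Between23}: it is stated as a result of Frieze and Pegden and cited from \cite{Between23} (the paper's own contribution in this direction is the converse, Theorem \ref{thm:homo}, which settles Conjecture \ref{conj:Between23}). So there is no in-paper argument to compare yours against, and I am assessing your proposal on its own.

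Your overall strategy is sensible: reduce to the 2-core of the giant via the kernel-with-$\mathrm{Geom}(1-\mu)$-subdivisions picture, map spanning-tree paths to straight walks on $C_{2\ell+1}$, close long non-tree edges using slack (your observation that any non-tree edge with $L_e \geq 2\ell$ can always be closed, because one of the two arc-distances on $C_{2\ell+1}$ has the correct parity and is at most $2\ell$, is correct), and handle pendant trees and non-giant components by greedy extension. The genuine gap is exactly the one you flag, and the odd-girth hypothesis does not close it as directly as you claim. With $d_f = L_f$ fixed on tree edges, the fundamental-cycle constraint for a non-tree edge $e=uv$ forces $d_e \equiv \phi(v)-\phi(u) \pmod{2\ell+1}$, where $\phi(x)=d_T(r,x)$. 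The odd-girth hypothesis constrains the \emph{unsigned} total length $L_e + d_T(u,v)$ of that cycle; but $\phi(v)-\phi(u) = d_T(r,v)-d_T(r,u)$ is a \emph{signed} quantity in which the two legs of the $u$--$v$ tree path (to and from the least common ancestor) enter with opposite signs and may cancel arbitrarily. Thus knowing that $L_{C_e}$ is even or $\geq 2\ell+1$ does not by itself place a valid $d_e$ inside $[-L_e,L_e]$ with parity $L_e$ when $L_e < 2\ell$. To complete the argument you would need to show that the spanning tree and the tree-edge displacements $d_f$ (in particular on short tree edges) can be rechosen so that every short non-tree edge's residue lands in range, while leaving the long non-tree constraints (which tolerate any residue) intact. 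This is plausible since whp the all-short cycles of $\K$ are few and vertex-disjoint, but that is precisely the step your sketch leaves unsupplied; ``abundant extra length of typical (long) subdivisions'' is a heuristic, not a proof.
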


In Theorem \ref{thm:Between23} the size of the cycle, $2\ell +1$, is fixed, and $\varepsilon$ (and thus the edge probability $p$) depends on $\ell$. It is also natural to ask, for a fixed probability, about the values of $\ell$ for which there is a homomorphism from the random graph to $C_{2\ell + 1}$. 
Frieze and Pegden conjectured the following.

\begin{conjecture}[Conjecture 1 in \cite{Between23}]\label{conj:Between23}
	For any $c > 1$, there is an $\ell_c$ such that with high probability, there is no
	homomorphism from $G\sim G(n, \frac {c}n)$
	to $C_{2\ell+1}$ for any $\ell \geq \ell_c$.
\end{conjecture}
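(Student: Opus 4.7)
The plan is to derive Conjecture \ref{conj:Between23} directly from Theorem \ref{thm:main2} via the following deterministic lemma: if a graph $G$ admits a homomorphism $\varphi : G \to C_{2\ell+1}$, then
\[
\dist(G) \leq \frac{|E(G)|}{2\ell+1}.
\]
To prove this, label the edges of $C_{2\ell+1}$ as $e_0, \dots, e_{2\ell}$ and set $E_i := \{\{u,v\} \in E(G) : \{\varphi(u), \varphi(v)\} = e_i\}$. The sets $E_i$ partition $E(G)$, so by pigeonhole some $E_i$ has size at most $|E(G)|/(2\ell+1)$. Since $C_{2\ell+1} - e_i$ is a path, it is bipartite, and the restriction of $\varphi$ remains a homomorphism from $G \setminus E_i$ into this path; hence $G \setminus E_i$ is bipartite, yielding the bound.

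For the main range $c \in (1,2)$, set $\varepsilon := c - 1$. Standard Chernoff-type concentration gives $|E(G)| \leq cn$ \whp, and Theorem \ref{thm:main2} provides a constant $K = K(\varepsilon) > 0$ with $\dist(G) \geq K \varepsilon^3 n$ \whp. On the intersection of these two \whp events, the lemma implies that any homomorphism $\varphi : G \to C_{2\ell+1}$ forces $K\varepsilon^3 n \leq cn/(2\ell+1)$, that is, $2\ell+1 \leq c/(K\varepsilon^3)$. Choosing $\ell_c := \lceil c/(2K\varepsilon^3) \rceil$ therefore rules out such homomorphisms for every $\ell \geq \ell_c$ simultaneously, since the underlying \whp event does not depend on $\ell$ and the deterministic lemma is applied separately for each $\ell$.

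The case $c \geq 2$ reduces to the previous one by the standard monotone coupling $G(n, 1.5/n) \subseteq G(n, c/n)$: the quantity $\dist$ is monotone non-decreasing under edge addition (any edge set whose removal bipartizes the larger graph also bipartizes any subgraph), hence $\dist(G(n, c/n)) \geq \dist(G(n, 1.5/n)) = \Omega(n)$ \whp while $|E(G(n, c/n))| = O(cn)$ \whp, and the same pigeonhole step provides an $\ell_c$ depending only on $c$. I do not foresee any genuine obstacle: the whole argument is a short corollary of Theorem \ref{thm:main2}. The only points that need care are the direction of monotonicity of $\dist$ and the verification that restricting a homomorphism to an edge-subgraph preserves the homomorphism property, both of which are immediate.
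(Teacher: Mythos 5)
Your proof is correct and follows exactly the paper's approach: the pigeonhole argument showing that a homomorphism $G \to C_{2\ell+1}$ forces $\dist(G) \leq e(G)/(2\ell+1)$, followed by Theorem~\ref{thm:main2} to obtain a linear-in-$n$ lower bound on $\dist(G)$, and then a choice of $\ell_c$ making these two bounds incompatible for all $\ell \geq \ell_c$ simultaneously. Your explicit handling of $c \geq 2$ via monotonicity of $\dist$ under edge addition is a sound detail that the paper leaves implicit (it applies Theorem~\ref{thm:main2}, stated only for $\varepsilon \in (0,1)$, without further comment), but the core argument is the same.
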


\noindent
In Section \ref{sec:between23} we give a proof of this conjecture using Theorem \ref{thm:main2}. We show the following.

\begin{theorem}\label{thm:homo}
		For any $\varepsilon > 0$, there is an $\ell_\varepsilon$ such that \whp there is no
		homomorphism from $G\sim G(n, \frac {1+\varepsilon}n)$
				to $C_{2\ell+1}$ for any $\ell \geq \ell_\varepsilon$. In fact, $\ell_\varepsilon=O \left(\frac 1{\varepsilon^3}\right)$.
\end{theorem}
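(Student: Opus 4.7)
The plan is to leverage Theorem \ref{thm:main2} via a short pigeonhole argument applied to any hypothetical homomorphism $\varphi\colon G\to C_{2\ell+1}$. The key structural observation is that $C_{2\ell+1}$ is only one edge away from being bipartite: deleting any single edge of $C_{2\ell+1}$ leaves the path $P_{2\ell+1}$, which is $2$-colorable. So a homomorphism into $C_{2\ell+1}$ forces most of $G$ to look bipartite, and quantifying ``most'' will yield the desired bound.

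First I would show that if a homomorphism $\varphi\colon G\to C_{2\ell+1}$ exists, then $\dist(G)\le |E(G)|/(2\ell+1)$. For each edge $e\in E(C_{2\ell+1})$ let $S_e:=\{f\in E(G):\varphi(f)=e\}$; the sets $S_e$ partition $E(G)$. For any choice of $e$, the graph $G\setminus S_e$ still maps under $\varphi$ into the bipartite path $C_{2\ell+1}\setminus\{e\}$, and is therefore itself bipartite. Averaging over the $2\ell+1$ edges of the odd cycle yields some $e^*$ with $|S_{e^*}|\le |E(G)|/(2\ell+1)$, establishing the claimed upper bound on $\dist(G)$.

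Next I would plug in the random graph estimates. For $G\sim G(n,(1+\varepsilon)/n)$, a Chernoff bound gives \whp $|E(G)|=(1+o(1))(1+\varepsilon)n/2$, while Theorem \ref{thm:main2} supplies an absolute constant $c>0$ such that \whp $\dist(G)\ge c\varepsilon^3 n$. On the intersection of these two high-probability events, existence of a homomorphism to $C_{2\ell+1}$ would force
\[
c\varepsilon^3 n \;\le\; \frac{(1+o(1))(1+\varepsilon)n}{2(2\ell+1)},
\]
which rearranges to $2\ell+1=O(1/\varepsilon^3)$. Choosing $\ell_\varepsilon:=\lceil K/\varepsilon^3\rceil$ for a sufficiently large absolute constant $K$ therefore excludes all such homomorphisms for every $\ell\ge\ell_\varepsilon$ at once.

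I do not foresee a serious obstacle: the hard analytic work is packaged into Theorem \ref{thm:main2}, and the reduction above is a one-step averaging argument. The only minor point worth flagging is uniformity in $\ell$, but this is automatic — the inequality $\dist(G)>|E(G)|/(2\ell+1)$ becomes easier to satisfy as $\ell$ grows, so the conclusion holds on a single \whp event with no union bound over $\ell$ required.
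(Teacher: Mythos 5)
Your argument is correct and is essentially the same as the paper's: both use the pigeonhole/averaging observation that a homomorphism $\varphi\colon G\to C_{2\ell+1}$ forces $\dist(G)\le e(G)/(2\ell+1)$, and then compare this with the lower bound $\dist(G)=\Omega(\varepsilon^3)n$ from Theorem~\ref{thm:main2} and the concentration of $e(G)$ to deduce $\ell=O(\varepsilon^{-3})$, with uniformity in $\ell$ being automatic.
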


The second application of Theorem \ref{thm:main2} is related to colorability of biased random tournaments. Consider the following random tournament model. We start with $K_n$ and order its vertices in the natural order $1,2,3,\dots , n$. A \textit{$p$-random tournament} on $n$ vertices, $T\sim T(n,p)$, is a tournament for which $\overrightarrow{ji}\in E(T)$ with probability $p$ and $\overrightarrow{ij}\in E(T)$ with probability $1-p$, for every $1\leq i<j\leq n$ independently. Observe trivially that for $p=o(1)$ most of the edges of $T\sim T(n,p)$ typically point forward, explaining our terminology of a biased random tournament. In \cite{Tnp},  \L uczak,  Ruci\'nski, and  Gruszka studied properties of $T(n,p)$ such as the appearance of small subdigraphs and  strong connectivity.  In some cases, it is more natural to view this model as a perturbation of the transitive tournament: we start with the transitive tournament on $[n]$ and then choose each oriented edge with probability $2p$ and re-orient it uniformly at random. 
Random perturbations of general tournaments were studied by Krivelevich, Kwan and Sudakov in \cite{PerturbedDigraphs}.
%Krivelevich, Kwan and Sudakov \cite{PerturbedDigraphs} studied random perturbations of general tournaments and determined the threshold for the appearance of multiple edge-disjoint Hamilton cycles.  
%We refer the reader to \cite{PerturbedDigraphs} for more details regarding this model and other related models. 

We say that a tournament is $k$-colorable if there a $k$-coloring of its vertices such that for every $i\in [k]$ the sub-tournament induced by the vertices of color $i$ is transitive. The \textit{chromatic number} of a tournament $T$, denoted by $\chi(T)$, is the minimal $k$ for which $T$ is $k$-colorable. In the past few years there has been extensive research into the chromatic number of tournaments. Much of the work dealt with the chromatic number of tournaments with some forbidden substructure. Most notably, Berger et al. characterized the tournaments that are {\em heroes} (see \cite{heroes}). A tournament $H$ is called a hero if there exists $C > 0$ such that every $H$-free tournament $G$ satisfies $\chi(G) \leq C$. 
%See \cite{pseudoHeroes} for some more results of this nature.  

In this paper we will show that the coloring properties of $p$-random tournaments are similar to those of the random graph model $G(n,p)$. For random graphs, it is known that for 
$k \geq 3$ we have a sharp threshold for $G\sim G(n,p)$ being $k$-colorable. In particular, it is known that for $c>0$, $G\sim G(n,\frac cn)$ satisfies \whp $\chi(G)\in \{k,k+1\}$, where $k$ is the smallest integer such that $c<2k\log k$ (see \cite{AchFri99,AchNaor04,Luczak91}). However, in the case of 2-colorability we observe an entirely different phenomenon (see, e.g., Chapter 5 in \cite{Bol98}). For $c\in(0,1)$, $G\sim G(n,\frac cn)$ contains an odd cycle with probability bounded away from zero (and dependent on $c$), and therefore $\chi(G)>2$ with probability bounded away from zero. On the other hand, $G$ is acyclic with probability bounded away from zero (and dependent on $c$), and thus $\chi(G)\leq 2$ with probability bounded away from zero. 
For $c>1$, $G\sim G(n,\frac cn)$ is not $2$-colorable \whp 
We show that $p$-random tournaments behave similarly.

	\begin{theorem}\label{thm:2colorTour}
		Let $\varepsilon \in (0,1)$ and let $T\sim T(n,\frac {1-\varepsilon}n)$. Then for large enough $n$ we have that \newline  
		$c_{\varepsilon} \leq \Pr\left[\chi(T)\leq2\right] \leq 1-c'_{\varepsilon}$, where 
		$c_{\varepsilon}, c'_{\varepsilon}>0 $ are constants depending on $\varepsilon$.  
	\end{theorem}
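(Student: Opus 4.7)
The plan is to reduce the theorem to standard facts about the subcritical random graph through a natural coupling. Realize $T\sim T(n,p)$ as the transitive tournament on $[n]$ in which each edge is independently reversed with probability $p$, and let $B$ be the undirected graph of these reversed (``backward'') edges; then $B\sim G(n,p)$. The key observation is that whenever a vertex set $S\subseteq [n]$ spans no edge of $B$, all edges of $T[S]$ point forward, so $T[S]$ is transitive. In particular, if $B$ is bipartite with classes $(V_1,V_2)$ then both $T[V_1]$ and $T[V_2]$ are transitive, so $\chi(T)\le 2$.

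For the lower bound I would invoke the classical fact that $B\sim G(n,(1-\varepsilon)/n)$ is a forest (hence bipartite) with probability bounded below by a positive constant $c_\varepsilon=c(\varepsilon)$ (see Chapter~5 of \cite{Bol98}). Combined with the observation above this gives $\Pr[\chi(T)\le 2]\ge c_\varepsilon$.

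For the upper bound I would exhibit a fixed non-$2$-colorable tournament $T_0$ on a constant number of vertices which appears as an induced sub-tournament of $T$ with probability at least some constant $c'_\varepsilon>0$; since $2$-colorability is inherited by induced sub-tournaments this forces $\chi(T)\ge 3$ on that event. A natural candidate is the Paley tournament $T_0=QR_7$ on $\mathbb{Z}/7\mathbb{Z}$ with $i\to j$ iff $j-i\in\{1,2,4\}\pmod 7$. Using its arc-transitivity one verifies that every $4$-subset of its vertices spans a directed $3$-cycle, so the maximum transitive sub-tournament of $QR_7$ has exactly $3$ vertices; since $7>2\cdot 3$, any bipartition of $V(QR_7)$ has a part that is not transitive, and hence $\chi(QR_7)\ge 3$. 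In its natural cyclic labeling $QR_7$ has exactly $7$ backward edges (the pairs with difference in $\{3,5,6\}\pmod 7$), so for $p=(1-\varepsilon)/n$ the expected number $X$ of induced copies of $QR_7$ in $T$ is of order $\binom{n}{7}p^7=\Theta_\varepsilon(1)$. A standard second-moment estimate for fixed sub-tournament counts at the appearance threshold then yields $\Pr[X\ge 1]\ge c'_\varepsilon>0$ via Paley--Zygmund, which finishes the upper bound.

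The main obstacle is the upper bound. Conceptually one needs to identify a non-$2$-colorable tournament small enough to appear with positive probability at $p=(1-\varepsilon)/n$; technically, the biased, asymmetric edge distribution of $T(n,p)$ makes the second-moment computation for induced $QR_7$-counts more delicate than in the unbiased $G(n,p)$ setting, because different embeddings of $QR_7$ into $[n]$ carry different numbers of backward edges and hence different probabilities, so one has to track the labelings minimising this count and check that partially-overlapping pairs of copies contribute only $O(\mathbb{E}[X]^2)$ to $\mathbb{E}[X^2]$.
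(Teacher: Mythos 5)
Your proposal is correct and follows essentially the same two-part strategy as the paper: reduce the lower bound to bipartiteness of the backedge graph $B\sim G(n,(1-\varepsilon)/n)$ via Observation~\ref{obs:backedge_chromatic}, and for the upper bound use a second-moment/Paley--Zygmund argument to show a fixed 7-vertex non-2-colorable tournament appears with probability bounded away from zero. The one substantive difference is the choice of witness. You propose $QR_7$ in its cyclic labeling, which has 7 backedges, so $\mathbb{E}[X]=\Theta_\varepsilon(1)$ and one sits exactly at the appearance threshold; the paper instead constructs a tournament $H$ (two vertex-disjoint 3-cycles plus a seventh ``dominating'' vertex wired so that $\{1,2,3\}\to\{4,5,6\}\to 7\to\{1,2,3\}$) that is also non-2-colorable but has only $5$ backedges in the ordering $1,\dots,7$, giving the much larger $\mathbb{E}[X]=\Theta_\varepsilon(n^2)$. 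That choice makes the variance computation cleaner: the pair-overlap sum is immediately seen to be $O(n^4)=O(\mathbb{E}[X]^2)$ once one verifies the one combinatorial fact the paper records as Item~2 of Observation~\ref{obs:H}, namely that in the fixed labeling every vertex subset $S$ spans at most $|S|$ backedges. Your plan needs exactly the same fact for $QR_7$'s cyclic labeling (so that the $\ell-k$ exponent in the $n^{14-k}p^{14-\ell}$ overlap terms is nonpositive), and this does hold -- the backedge differences are $\{3,5,6\}\pmod 7$ and a short case analysis confirms $\ell\le|S|$ -- but you flag it as a delicate point without actually carrying it out, whereas it is really the crux of why the second moment stays $O(\mathbb{E}[X]^2)$. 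Also note that you need not ``track the labelings minimising the backedge count'': counting only ordered copies in one fixed labeling (as the paper does) is both sufficient and simpler, since the second-moment ratio only requires the expectation to be bounded below, not to be maximized.
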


%For the regime $p=\frac {1+\varepsilon}n$, we prove an analogue of the lower bound on $\dist$ in random graphs (i.e. Theorem \ref{thm:main2}). The {\em distance of $T$ from bipartiteness}, denoted by $\distT(T)$, is the minimal number of edges that need to be reversed to make $T$ bipartite (2-colorable).
For the regime $p=\frac {1+\varepsilon}n$, we prove an analogue  of Theorem \ref{thm:main2} for tournaments. The {\em distance of $T$ from bipartiteness}, denoted by $\distT(T)$, is the minimal number of edges that need to be reversed to make $T$ bipartite (2-colorable). The following theorem is the tournament analogue of Theorem \ref{thm:main2}.
%The next theorem states that $\distT$ is of order $\Theta(n)$ for a typical $T\sim T(n,\frac {1+\varepsilon}n)$. 

%\begin{theorem}\label{thm:TourFar}
%	Let $\varepsilon>0$ 
%%	be a small enough constant 
%	and let $T \sim T \left( n, \frac{1+\varepsilon}{n} \right)$. Then \whp $\distT(T) \geq \Omega(\varepsilon^3) n$. 
%\end{theorem}
\begin{theorem}\label{thm:TourFar}
	Let $\varepsilon\in (0,1)$ 
	%	be a small enough constant 
	and let $T \sim T \left( n, \frac{1+\varepsilon}{n} \right)$. Then 
	\whp $\distT(T) = \Theta(\varepsilon^3) n$. 
\end{theorem}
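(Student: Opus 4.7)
The proof of Theorem~\ref{thm:TourFar} splits into upper and lower bounds. Let $G$ denote the graph on $[n]$ whose edges are the pairs $\{i,j\}$ with $i<j$ for which the $T$-edge is oriented $j\to i$, so $G\sim G(n,(1+\varepsilon)/n)$.

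For the upper bound, I would observe $\distT(T)\le\dist(G)$: let $F$ be a set of $\dist(G)$ edges such that $G\triangle F$ is bipartite with parts $V_1,V_2$, and let $T'$ be the tournament obtained from $T$ by reversing the edges of $F$. Then the reversed-edge graph of $T'$ is $G\triangle F$; thus no edge within any $V_i$ is reversed, so $T'[V_i]$ is the transitive tournament on $V_i$ in natural order, and $T'$ is $2$-colorable by $(V_1,V_2)$. By Theorem~\ref{thm:main2}, $\dist(G)=O(\varepsilon^3)n$ w.h.p., giving the upper bound.

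For the lower bound, I would work through the reformulation
\[
\distT(T)=\min_{\sigma\in S_n}\dist(B_\sigma),\qquad B_\sigma:=G\triangle D_\sigma,
\]
where $D_\sigma=\{\{i,j\}:i<j,\ \sigma(i)>\sigma(j)\}$ is the inversion graph of $\sigma$ and $B_\sigma$ is the graph of pairs whose $T$-edge points backward in $\sigma$. The identity follows since any 2-coloring realizing $\distT(T)$, with parts $V_1,V_2$ and internal orders $\sigma_1,\sigma_2$, can be encoded as a concatenated linear order $\sigma$ on $[n]$ for which the reversed edges coincide with the within-part edges of $B_\sigma$, and conversely. Using the Lipschitz inequality $|\dist(H_1)-\dist(H_2)|\le|H_1\triangle H_2|$ (a direct consequence of the definition of $\dist$ via partitions), one obtains $\dist(B_\sigma)\ge\dist(G)-|D_\sigma|$, so for every $\sigma$ with $|D_\sigma|\le\dist(G)/2$ the desired bound $\dist(B_\sigma)=\Omega(\varepsilon^3 n)$ follows from Theorem~\ref{thm:main2}.

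The hard part will be handling $\sigma$ with $|D_\sigma|>\dist(G)/2$. Here I would invoke the Ding--Lubetzky--Peres structural description of the giant component's 2-core, which contains $\Theta(\varepsilon^3 n)$ excess edges, each contributing an odd-cycle obstruction to bipartiteness. Since $D_\sigma$ is an inversion graph (hence triangle-closed and perfect), I expect to argue that each edge of $D_\sigma$ can cancel only $O(1)$ of these obstructions, so $B_\sigma=G\triangle D_\sigma$ retains $\Omega(\varepsilon^3 n)$ of them regardless of $\sigma$. Quantifying this cancellation against the random excess structure of the 2-core is the technical crux of the lower bound.
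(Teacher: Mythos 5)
Your upper bound is correct and essentially the paper's (it is exactly Observation~\ref{obs:backedge_chromatic} together with Theorem~\ref{thm:main2}), and your reformulation $\distT(T)=\min_{\sigma\in S_n}\dist(B_\sigma)$ with $B_\sigma=G\triangle D_\sigma$ is also valid. The Lipschitz bound $\dist(B_\sigma)\ge\dist(G)-|D_\sigma|$ correctly disposes of the case $|D_\sigma|\le\dist(G)/2$. The problem is that you have left the genuinely hard case --- large $|D_\sigma|$ --- unresolved, and the heuristic you gesture at (``each edge of $D_\sigma$ can cancel only $O(1)$ obstructions'') will not close the gap. An inversion graph $D_\sigma$ can have $\Theta(n^2)$ edges, vastly exceeding the $\Theta(\varepsilon^3 n)$ budget, and even restricted to the $\Theta(\varepsilon n)$ vertices of the 2-core it can have $\Theta(\varepsilon^2 n^2)$ edges; an $O(1)$-per-edge cancellation bound is therefore far too weak. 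Worse, a symmetric difference with a single edge does not toggle odd cycles one at a time: one added chord can simultaneously change the parity or shorten many cycles, so the ``cancellation'' picture is not even locally accurate. Perfection or triangle-closedness of $D_\sigma$ does not obviously control the odd-cycle structure of $G\triangle D_\sigma$, and you would in any case need a statement uniform over all $n!$ permutations, which a union bound over the random structure of the DLP 2-core does not readily support.

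The paper's lower bound takes a different and more direct route that avoids the $\sigma$-dichotomy entirely. Fix any bipartition $(V_1,V_2)$; by Theorem~\ref{thm:main2} the backedge graph $B$ has $\Omega(\varepsilon^3 n)$ edges inside the parts. Crucially, backedges of $T\sim T(n,\frac{1+\varepsilon}{n})$ are sparse (only $O(n)$ of them), have bounded maximum degree ($\le\log n$), and are almost all ``$\alpha$-long'' in the natural order (Claim~\ref{claim:shortedges}). If fewer than $(\delta-o(1))n$ edges are reversed, a set of $\Omega(n^{5/6})$ long backedges inside one part survives; extract a matching $M$ of size $\Omega(n^{5/6}/\log n)$ (Claim~\ref{claim:matching}). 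The key structural fact, Claim~\ref{claim:backedges}, is that if $t$ vertex-disjoint $\alpha$-long backedges sit inside a \emph{transitive} subtournament, then transitivity forces $\binom{\alpha t+1}{2}$ backedges in total --- transitivity ``amplifies'' the surviving long backedges quadratically. This yields $\omega(n)$ backedges in the reversed tournament $T'$, hence $\omega(n)$ in $T$, contradicting sparsity. The point is that the argument never has to control $D_\sigma$ directly; it exploits that any linear order on $V_1$ interacts with the natural order through long backedges, and transitivity propagates them. You would need to find an analogous amplification mechanism, or a different handle on large-$|D_\sigma|$ permutations, to complete your proposal.
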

\noindent
Theorem \ref{thm:TourFar} clearly implies the following corollary.

\begin{corollary}\label{thm:TourNot2color}
	Let $\varepsilon\in (0,1)$ and let $T\sim T(n,\frac {1+\varepsilon}n)$. Then \whp 
	$\chi(T)>2$.  
\end{corollary}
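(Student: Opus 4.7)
The plan is to derive this corollary essentially for free from Theorem \ref{thm:TourFar}, using only the definition of $\distT$. First I would note that by the definition, $\distT(T) = 0$ holds if and only if $T$ itself is already $2$-colorable (bipartite), since $\distT(T)$ is the minimum number of arcs one must reverse to reach a $2$-colorable tournament. Equivalently, if $\chi(T) \le 2$ then zero edge-reversals suffice, so $\distT(T) = 0$.

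Taking the contrapositive, any tournament with $\distT(T) \ge 1$ must satisfy $\chi(T) > 2$. Applying Theorem \ref{thm:TourFar} to $T \sim T(n,(1+\varepsilon)/n)$, with high probability we have $\distT(T) = \Theta(\varepsilon^3)\, n$, which for $\varepsilon \in (0,1)$ fixed and $n$ sufficiently large is a positive integer (in fact linear in $n$). Combining the two observations, whp $\chi(T) > 2$, as desired.

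There is no real obstacle here: the only content is the tautological equivalence between $\distT(T) = 0$ and $2$-colorability of $T$, and all the genuine probabilistic work has already been done inside Theorem \ref{thm:TourFar}. One might briefly remark that the same argument gives a stronger statement — whp every $2$-coloring of $T$ leaves $\Theta(\varepsilon^3)\, n$ monotonicity-violating arcs within the color classes — but for the corollary as stated the one-line deduction above is all that is needed.
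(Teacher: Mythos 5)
Your proposal is correct and matches the paper exactly: the paper introduces the corollary with ``Theorem \ref{thm:TourFar} clearly implies the following corollary,'' offering no further argument, and your one-line deduction ($\distT(T)=0$ iff $\chi(T)\le 2$, while the theorem gives $\distT(T)=\Theta(\varepsilon^3)n>0$ whp) is precisely the intended reasoning.
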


%While for 2-colorability the threshold function is not sharp, in the case of $k$-colorability (for $k\geq 3$) there exists a sharp threshold function (up to a multiplicative factor).
In the next theorem we determine the order of magnitude of the threshold for $k$-colorability for every $k\geq 3$.

 \begin{theorem}\label{prop:kColorabilityOfRandomTournaments}
 	For every $k\geq 3$, there exist constants $c:=c(k)$ and $C:=C(k)$ such that if $p\geq \frac {C(k)}n$, then for $T\sim T(n,p)$ \whp $\chi(T)>k$, and if $p\leq \frac {c(k)}n$ then for $T\sim T(n,p)$ \whp $\chi(T)\leq k$. In fact, $c(k),C(k)=\Theta(k\log k)$.
 \end{theorem}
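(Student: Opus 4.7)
The plan is to use the \emph{reversal graph} $R$ of $T$ for both directions: $R$ is the graph on $[n]$ in which $\{i,j\}$ with $i<j$ is an edge iff the tournament edge between $i$ and $j$ is reversed (oriented $j\to i$ in $T$). By the definition of $T(n,p)$, we have $R \sim G(n,p)$. The central observation is that any independent set in $R$ induces a transitive sub-tournament of $T$: since no reversed edges appear among its vertices, $T$ restricted to such a set is the transitive tournament in the natural order on $[n]$. In particular $\chi(T) \leq \chi(R)$. This immediately yields the upper bound: classical results on the chromatic number of sparse random graphs~\cite{AchNaor04, Luczak91} give that $\chi(G(n,c/n)) \leq k$ \whp whenever $c < 2(k-1)\log(k-1)$. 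Taking $c(k) := 2(k-1)\log(k-1) - 1 = \Theta(k\log k)$, we have \whp $\chi(R) \leq k$ and hence $\chi(T) \leq k$.

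For the lower bound, set $m := \lceil n/k \rceil$. If $\chi(T) \leq k$, then by pigeonhole some color class has size $\geq m$ and induces a transitive sub-tournament on $m$ vertices, so it suffices to show that \whp no such sub-tournament exists; I would do this by the first moment method. For a fixed $S \subseteq [n]$ with $|S|=m$, the transitive tournaments on $S$ are in bijection with permutations $\pi$ of $S$ (viewed as linear orders), and the edges reversed relative to the natural order in the transitive tournament $T_\pi$ form exactly the inversion set $\mathrm{Inv}(\pi)$. Since the edge orientations in $T(n,p)$ are independent,
\[
 \Pr\bigl[T[S] \text{ is transitive}\bigr]
 \;=\; \sum_{\pi \in S_m} p^{|\mathrm{Inv}(\pi)|}(1-p)^{\binom{m}{2}-|\mathrm{Inv}(\pi)|}
 \;=\; (1-p)^{\binom{m}{2}}\cdot [m]_q!\,,
\]
where $q := p/(1-p)$ and $[m]_q! = \prod_{i=1}^m (1+q+\cdots+q^{i-1})$ is the $q$-factorial. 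Using $[m]_q! \leq \bigl((1-p)/(1-2p)\bigr)^m \leq 2^m$ (valid for $p \leq 1/4$) together with $\binom{n}{m} \leq (ek)^{n/k}$, the expected number of transitive $m$-subsets of $T$ is at most
\[
 (2ek)^{n/k}\cdot \exp\!\bigl(-\tfrac{1}{2}\, p\, m(m-1)\bigr).
\]
Setting $p = C(k)/n$ with $C(k) = Ak\log k$ for a sufficiently large absolute constant $A$ (e.g.\ $A = 6$), the exponent becomes $\tfrac{n}{k}\bigl[\log(2ek) - \tfrac{A}{2}\log k + o(1)\bigr] \to -\infty$ uniformly in $k \geq 3$. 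Hence $C(k) = \Theta(k\log k)$ suffices and \whp $T$ has no transitive sub-tournament of size $m$, giving $\chi(T) > k$.

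The main technical point is the bound on $[m]_q!$, which crucially exploits the smallness of $q$. Using only the trivial bound $[m]_q! \leq m!$ would produce a factor of order $(n/e)^{n/k}$ in the first-moment estimate and force $p$ to grow like $(\log n)/m$, far above the correct threshold. In contrast, $[m]_q! \leq 2^m$ captures the rarity of permutations with many inversions when $p \ll 1/2$, and is what yields the correct $\Theta(k\log k)$ scaling of $C(k)$.
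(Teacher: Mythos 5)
Your proof is correct, and the lower-bound direction (showing $\chi(T) \leq k$ for $p \leq c(k)/n$ via the backedge/reversal graph) is identical to the paper's argument. The upper-bound direction (showing $\chi(T) > k$ for $p \geq C(k)/n$) is where you diverge from the paper, though both are first-moment arguments bounding the expected number of transitive subtournaments on $m = \lceil n/k\rceil$ vertices. The paper proceeds indirectly: it takes a near-perfect packing of $\Theta(m^2)$ edge-disjoint triangles inside a fixed $m$-set, observes each triangle is transitive independently with probability $1 - p(1-p) \leq 1 - 0.9C/n$, and multiplies. You instead compute $\Pr[T[S]\ \text{transitive}]$ exactly as $(1-p)^{\binom{m}{2}}[m]_q!$ with $q = p/(1-p)$, using the classical inversion-generating-function identity, and then bound the $q$-factorial by $(1-q)^{-m} \leq 2^m$. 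Both routes deliver $C(k) = \Theta(k\log k)$; the paper gets $C = 8k\log k$ and you get $C = 6k\log k$. Your approach is arguably cleaner — it avoids invoking a triangle-packing lemma and makes transparent where the independence of edge orientations enters — while the paper's triangle-packing argument has the virtue of being a crude but robust template that would still work if one only knew that some small obstruction (here a cyclic triangle) occurs with probability $\Theta(p)$ inside each transitive set. Your remark about why the trivial bound $[m]_q! \leq m!$ would be fatally lossy is also correct: it would push the threshold up to $\Theta(\log n / n)$, so the $q$-factorial bound genuinely matters.

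One tiny nit: the inequality $(1-p)/(1-2p) \leq 2$ actually holds for all $p \leq 1/3$, so your stated condition $p \leq 1/4$ is safe but not tight; either is more than enough since $p = O(1/n)$ here.
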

%
%\begin{remark}\label{re:kColor}
%	This result is tight up to a constant, that is, for every $k\geq 3$, there exists a constant $c:=c(k)$ such that if $p\leq \frac {c(k)}n$, then for $T\sim T(n,p)$ \whp $\chi(T)\leq k$. Indeed, if we let $R$ be the graph on the vertex set $[n]$ where for every $i<j$, $ij\in E(R)$ if and only if $\arc{ji}\in T$, then $R\sim G(n,p)$. Observe that if $R$ is $k$-colorable then $T$ is also $k$-colorable. Indeed, an independent set in $R$ is transitive in $T$, since all the edges in it go in the same direction. It is known (see, e.g., \cite{AchNaor04}) that for every $k$, if $c(k)<2(k-1)\log(k-1)$ then for $p\leq \frac {c(k)}n$, $R\sim G(n,p)$ is \whp $k$-corolable, and therefore $T$ is also \whp $k$-colorable. 
%\end{remark}

\begin{remark}\label{re:smallPro}
For $T\sim T(n,p)$, where $p=o(\frac 1n)$, \whp $\chi(T)\leq 2$. This will be argued later.
\end{remark}

\begin{remark}
	Theorems \ref{thm:homo} and \ref{thm:TourFar} could have been proved  (with worse asymptotics in $\varepsilon$) using a weaker bound than the one in Theorem \ref{thm:main2}. In particular,  any linear (in $n$) lower bound on $\dist$, such as the bound given by Coopersmith et al.\ in \cite{MAXCUT}, would imply weaker (quantitative) versions of these theorems. 
\end{remark}		

\subsection{Notation and terminology}\label{sec:notation}

Our graph-theoretic notation is standard and follows that of \cite{West}. In particular we use the following:
For a graph $G$, let $V=V(G)$ and $E=E(G)$ denote its set of
vertices and edges, respectively. We let $v(G)=|V|$ and $e(G)=|E|$. For a subset $U\subseteq V$ we
denote by $E_G(U)$  all the edges
$e\in E$ with both endpoints in $U$. For subsets $U,W\subseteq V$ we
denote by $E_G(U,W)$  all the edges
$e\in E$ with both endpoints in $U\cup W$ for which $e\cap U\neq
\emptyset$ and $e\cap W\neq \emptyset$. We simply write $E(U)$ or $E(U,W)$ in the cases where there is no risk of confusion. We also write $e_G(U)=|E_G(U)|$ and $e_G(U,W)=|E_G(U,W)|$. 

% For a  subset $U\subset V$ , we write
%$N_G(U) = \{v \in V \setminus U : \exists u\in U\ s.t.\ \{u,v\}\in
%E(G)\}$.
% For a set of vertices $U\subseteq V$, we denote by $G[U]$ the corresponding vertex-induced subgraph of $G$ and we denote by $E_G[U]$ the edges of $G[U]$.
%% For a set of edges $T\subseteq E$, we denote by $G[T]$ the subgraph of $G$ consisting of the edge set $T$ together with any vertices that are their endpoints. We let $V_G[T]$ to be the vertex set of $G[T]$.

We assume that $n$ is large enough where needed.  We say that an event holds \emph{with high probability} (w.h.p.) if its probability  tends to one as $n$ tends to infinity.  For the sake of simplicity and clarity of presentation, and in order
to shorten some of the proofs, no real effort is made to optimize
the constants appearing in our results. We also sometimes omit floor
and ceiling signs whenever these are not crucial. When we write $\log n$ we mean the natural logarithm.

A tournament $T$ on $[n]$ is an orientation of the complete graph $K_n$. That is, $V(T)=[n]$ and for every edge $\{i,j\}$ of $K_n$ either $(i,j)\in E(T)$ or $(j,i)\in E(T)$.
We usually write $\overrightarrow{ij}$ to mean $(i,j)\in E(T)$ (the edge $\{i,j\}$ appears with the orientation from $i$ to $j$). 
Let $U,W\subseteq V(T)$ be two disjoint subsets of vertices. We write $U\to W$ to mean that for every $u\in U$ and for every $w\in W$, $\arc{uw}\in E(T)$. In the case that $U=\{u\}$ or $W=\{w\}$ we simply write $u\to W$ or $U\to w$, respectively. 
%(in the case that both  $U=\{u\}$ and $W=\{w\}$, we sometimes write $u\to w$ to mean $\arc{uw}\in E(T)$).
For $U\subseteq [n]$, we let $T[U]$ be the subtournament of $T$ induced by $U$.

For a tournament $T$ with $V(T)=[n]$ we let $B = B(T)$ be the undirected graph obtained from $T$ by keeping only backedges, that is $V(B)=[n]$ and 
$E(B) = \{\{i,j\} \ | \ i<j \text{ and } \arc{ji} \in E(T)\}$. $B$ is called the 
{\em backedge graph} of $T$. 
%For a backedge $e = \arc{vu}$, we define its interval to be 
%$I_e = [u,u+1,\dots,v-1]$. 
We will often use the following simple observation.

\begin{observation}\label{obs:backedge_chromatic}
Let $T$ be a tournament, let $B$ be its backedge graph and let 
$E' \subseteq E(B)$. If deleting the edges in $E'$ makes $B$ a $k$-colorable graph then reversing the corresponding arcs to $E'$ (in $T$) makes $T$ a $k$-colorable  tournament. In particular, if $B$ is $k$-colorable (as a graph) then $T$ is $k$-colorable (as a tournament) and $\distT(T) \leq \dist(B)$. 
\end{observation}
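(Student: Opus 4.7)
The plan is to exploit the natural linear order on $V(T)=[n]$. First I would isolate the key structural fact: for any $U \subseteq [n]$, if $B[U]$ has no edges, then every arc of $T[U]$ points from a smaller index to a larger one, so the natural order is a topological order of $T[U]$ and $T[U]$ is transitive. This is the only graph-theoretic input I need.

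Next I would prove the main implication. Fix a proper $k$-coloring $c$ of $B \setminus E'$, with color classes $U_1, \dots, U_k$. Since $c$ is proper on $B \setminus E'$, every edge of $B$ contained in a single color class $U_i$ must belong to $E'$. Let $T'$ be the tournament obtained from $T$ by reversing exactly the arcs that correspond to edges of $E'$. Then inside each $U_i$ every arc of $T'$ now points forward in the natural order (its corresponding backedge, if any, sat in $E'$ and got flipped), so $T'[U_i]$ is transitive by the key fact above, and $c$ becomes a valid $k$-coloring of $T'$ as a tournament.

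The ``in particular'' statement then follows by taking $E'$ to be a minimum edge set whose removal makes $B$ bipartite: then $|E'|=\dist(B)$, and applying the main implication with $k=2$ produces a set of $|E'|$ arcs whose reversal turns $T$ into a $2$-colorable tournament, so $\distT(T) \leq \dist(B)$. If $B$ is already $k$-colorable, I would simply take $E'=\emptyset$.

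There is no real obstacle here; the argument is essentially the tautology that ``no backedges inside a color class'' means ``arcs are forward-ordered inside the class.'' The only subtlety worth flagging is that the implication runs in one direction only: transitivity of $T[U]$ does not force $B[U]$ to be empty (it merely requires $B[U]$ to admit a linear order compatible with its arcs), which is precisely why one gets $\distT(T) \leq \dist(B)$ rather than equality.
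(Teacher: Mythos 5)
Your proof is correct and matches the paper's argument: both identify that reversing the arcs in $E'$ makes $B\setminus E'$ the backedge graph of the new tournament $T'$, and that an independent set in the backedge graph spans a transitive subtournament. The paper states this more compactly, but the substance is the same.
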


\begin{proof}
Set $B' = (V(B),E(B)\setminus E')$. Let $T'$ be the tournament obtained from $T$ by reversing every edge in $E'$. It is easy to see that $B'$ is the backedge graph of $T'$. 
Every independent set in $B'$ spans a transitive tournament in $T'$, implying that $T'$ is k-colorable.  
\end{proof}

%For a tournament $T$ and two sets
%$X,Y \subset V(T)$ we write $E_T(X,Y) = \{ \overrightarrow{xy} \in E(T): x \in X, y\in Y\}$. Also let $e_{T}(X,Y) = |E_T(X,Y)|$ and $e_{T}(X) = |E_T(X,X)|$. 
%For a vertex $v$ we denote $N_D^+(v)=E_D(\{v\},V(D))$ and $N_D^-(v)=E_D(V(D),\{v\})$. Let $d_D^+(v)=|N_D^+(v)|$ and $d_D^-(v)=|N_D^-(v)|$.
%Lastly, we write $x\in a\pm b$ to mean that $x$ is in the interval $[a-b,a+b]$.

%For an ordered set of vertices $V=[n]$, we say that $I\subseteq [n]$ is an \textit{interval} if there exist $i,j\in [n]$ such that $k\in I$ if and only if $i\leq k\leq j$ (that is, $I=\{i,i+1,\dots , j\}$). For an interval $I=\{i,i+1,\dots ,j-1, j\}$, we denote $I^-=\{i,i+1,\dots ,j-1\}$. We define the {\em length} of $I$ to be $|I| = j - i + 1$. 

\section{Tools}

\subsection{Binomial distribution bounds}
We use the following standard bound on the lower and the
upper tails of the Binomial distribution due to Chernoff (see, e.g.,
\cite{AloSpe2008}, \cite{JLR}):

\begin{lemma}\label{Che}
	Let $X\sim \Bin(n,p)$ and $\mu=\mathbb{E}(X)$, then
	\begin{enumerate}
		\item $\Pr\left(X<(1-a)\mu\right)<\exp\left(-\frac{a^2\mu}{2}\right)$ for every $a>0.$
		\item $\Pr\left(X>(1+a)\mu\right)<\exp\left(-\frac{a^2\mu}{3}\right)$ for every $0 < a < 1.$
	\end{enumerate}
\end{lemma}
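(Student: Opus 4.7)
The plan is to prove Lemma \ref{Che} by the classical exponential moment method that goes back to Chernoff himself. Write $X=\sum_{i=1}^n X_i$ where the $X_i$ are independent Bernoulli$(p)$ indicators, and for any real parameter $t$ apply Markov's inequality to the nonnegative variable $e^{tX}$. Since the $X_i$ are independent, the moment generating function factorizes and satisfies
\[
\mathbb{E}[e^{tX}] \;=\; \prod_{i=1}^n \mathbb{E}[e^{tX_i}] \;=\; (1-p+pe^t)^n \;\leq\; e^{np(e^t-1)} \;=\; e^{\mu(e^t-1)},
\]
using the elementary inequality $1+x\le e^x$.

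For the upper tail (part 2) I would take $t>0$ and obtain
\[
\Pr(X\ge (1+a)\mu) \;\leq\; e^{-t(1+a)\mu}\,\mathbb{E}[e^{tX}] \;\leq\; \exp\!\bigl(\mu(e^t-1-t(1+a))\bigr).
\]
Optimizing in $t$ by choosing $t=\ln(1+a)$ yields the sharp bound $\exp\!\bigl(-\mu\psi_+(a)\bigr)$ with $\psi_+(a)=(1+a)\ln(1+a)-a$. The remaining task is the calculus fact that $\psi_+(a)\ge a^2/3$ for $0<a<1$, which follows by showing that $f(a):=\psi_+(a)-a^2/3$ vanishes to second order at $0$ and has nonnegative derivative on $(0,1)$ (a one-line computation with $f'(a)=\ln(1+a)-2a/3$ and monotonicity).

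For the lower tail (part 1) I would symmetrically take $t<0$, and the same derivation with $t=\ln(1-a)$ (for $0<a<1$) gives $\Pr(X\le(1-a)\mu)\le\exp\!\bigl(-\mu\psi_-(a)\bigr)$ where $\psi_-(a)=(1-a)\ln(1-a)+a$. Here one uses $\psi_-(a)\ge a^2/2$ on $(0,1)$, again by checking derivatives. For the case $a\ge 1$ there is nothing to prove, since then $(1-a)\mu\le 0$ and the bound $\exp(-a^2\mu/2)$ is trivially larger than the probability of the (possibly empty) event.

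I expect the one mildly fussy step to be the two elementary inequalities $\psi_+(a)\ge a^2/3$ and $\psi_-(a)\ge a^2/2$; these are the reason the two tails get different constants in the exponent, and a careful (but routine) single-variable calculus argument is needed in each case. Everything else, independence, Markov, optimization of $t$, is standard and mechanical.
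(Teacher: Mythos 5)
The paper does not actually prove Lemma \ref{Che}; it is quoted as a standard Chernoff-type bound and referred to \cite{AloSpe2008} and \cite{JLR}, so there is no ``paper proof'' to compare against. Your proposal is the standard exponential-moment (Chernoff) argument and is correct in structure and conclusion, including the correct treatment of the edge case $a\ge 1$ for the lower tail.

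One small imprecision worth flagging: for the upper-tail calculus fact $\psi_+(a)\ge a^2/3$ you appeal to ``$f'(a)=\ln(1+a)-2a/3$ and monotonicity,'' but $f'$ itself is \emph{not} monotone on $(0,1)$: $f''(a)=\tfrac{1}{1+a}-\tfrac{2}{3}$ changes sign at $a=\tfrac12$, so $f'$ first increases and then decreases. The argument still goes through because $f'(0)=0$, $f'(1)=\ln 2-\tfrac23>0$, and a function that rises from $0$ and then falls to a positive value stays nonnegative; hence $f$ is nondecreasing and $f(a)\ge f(0)=0$. The lower-tail companion $\psi_-(a)\ge a^2/2$ is cleaner, since there $g''(a)=\tfrac{a}{1-a}>0$ makes $g'$ genuinely monotone. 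With that one sentence tightened, the proof is complete and correct.
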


%\begin{lemma}\label{Che1}
%	Let $X\sim \Bin(n,p)$ and let $k>10\mathbb{E}(X)$, then $\Pr(X\geq k)\leq e^{-k}.$
%\end{lemma}

\noindent
We will also use the following Chernoff-type bound due to Hoeffding  (see, e.g.,
\cite{AloSpe2008}):

\begin{lemma}\label{Hof}
	For $X\sim \Bin(n,p)$ and $\mu=\mathbb{E}(X)$, we have 
		 $\Pr\left(|X-\mu|>tn\right)<2e^{-2t^2n}.$

\end{lemma}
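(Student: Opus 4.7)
The plan is to establish this two-sided tail bound via the standard exponential moment (Chernoff) method, with the key ingredient being Hoeffding's lemma applied to centered Bernoulli variables. Write $X = \sum_{i=1}^n X_i$ where $X_1,\dots,X_n$ are i.i.d.\ Bernoulli($p$), so that $X - \mu = \sum_{i=1}^n Y_i$ with $Y_i = X_i - p$ independent, mean-zero, and bounded in $[-p, 1-p]$, an interval of length $1$.

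First I would handle the upper tail $\Pr(X - \mu > tn)$. For any $\lambda > 0$, Markov's inequality gives
\[
\Pr(X - \mu > tn) \leq e^{-\lambda t n}\, \mathbb{E}\!\left[e^{\lambda(X-\mu)}\right] = e^{-\lambda t n} \prod_{i=1}^n \mathbb{E}\!\left[e^{\lambda Y_i}\right],
\]
using independence. The main tool is Hoeffding's lemma: for any mean-zero random variable $Y$ supported in an interval $[a,b]$, one has $\mathbb{E}[e^{\lambda Y}] \leq e^{\lambda^2 (b-a)^2 / 8}$. Applied to each $Y_i$ with $b-a = 1$, this gives $\mathbb{E}[e^{\lambda Y_i}] \leq e^{\lambda^2/8}$, so the product is at most $e^{n\lambda^2/8}$, yielding the bound $\exp(-\lambda t n + n\lambda^2/8)$.

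Optimizing in $\lambda$ by setting $\lambda = 4t$ gives the exponent $-2 t^2 n$, so $\Pr(X - \mu > tn) \leq e^{-2t^2 n}$. The lower tail $\Pr(X - \mu < -tn)$ is handled symmetrically: apply the same argument to $-Y_i$ (still bounded in an interval of length $1$ with mean zero). Adding the two one-sided bounds via the union bound gives $\Pr(|X - \mu| > tn) < 2 e^{-2t^2 n}$, as required.

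The only nontrivial step is Hoeffding's lemma itself. I would prove it by noting that $y \mapsto e^{\lambda y}$ is convex, so for $y \in [a,b]$ we have $e^{\lambda y} \leq \tfrac{b-y}{b-a} e^{\lambda a} + \tfrac{y-a}{b-a} e^{\lambda b}$; taking expectations (using $\mathbb{E}[Y]=0$) and then analyzing the resulting function $\varphi(\lambda) = \log\!\bigl(\tfrac{b}{b-a} e^{\lambda a} + \tfrac{-a}{b-a} e^{\lambda b}\bigr)$ via a Taylor expansion around $\lambda = 0$ shows $\varphi''(\lambda) \leq (b-a)^2/4$, hence $\varphi(\lambda) \leq \lambda^2 (b-a)^2/8$ by integrating twice. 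This convexity/Taylor estimate is the only real technical step; everything else is routine manipulation of moment generating functions and a single scalar optimization.
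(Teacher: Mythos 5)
Your proof is correct and is the standard derivation of Hoeffding's inequality via the exponential-moment (Chernoff) method combined with Hoeffding's lemma for bounded mean-zero variables. The paper does not prove this statement at all; it simply cites it as a known Chernoff-type bound (attributed to Hoeffding, with a pointer to Alon--Spencer), so there is no in-paper argument to compare against. One minor cosmetic point: your chain of inequalities naturally yields $\Pr(|X-\mu|>tn)\leq 2e^{-2t^2n}$ rather than the strict inequality stated; this is harmless (and can be tightened by observing that Hoeffding's lemma is strict for $\lambda\neq 0$), but worth being aware of when matching the exact form claimed.
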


\subsection{The structure of the giant component in random graphs }

For the proof of Theorem \ref{thm:main2} we will use a theorem by Ding, Lubetzky and Peres. First we need the following definition. 

\begin{definition}
	Let $G=(V,E)$ be a graph. The \textit{2-core} of $G$ is the maximal induced subgraph of $G$ with minimum degree at least two.
\end{definition}

\begin{theorem}[Theorem 1 in \cite{DLP}]\label{DLP}
	
	Let $\mathcal{C}$ be the 2-core of the largest component of $G(n,p)$ for $p = \frac{\lambda}{n}$, where $\lambda = 1+\varepsilon$ and $\varepsilon \in (0,1)$ is fixed. Let $\mu < 1$ be such that $\mu e^{-\mu} = \lambda e^{-\lambda}$. Let $\mathcal{\tilde{C}}$ be the following model:
	\begin{enumerate}
		\item 
		Let $\Lambda $ be Gaussian $ \mathcal N(\lambda - \mu, 1/n)$ and let $D_u \sim Poisson(\Lambda)$ for $u \in [n]$ be i.i.d., conditioned on the event that $\sum_{u=1}^{n}{D_u 1_{D_u \geq 3}}$ is even. Let $N_k = \#\{u : D_u = k\}$ and $N = \sum_{k\geq 3}{N_k}$. Select a random multigraph $\mathcal{K}$ on $N$ vertices, uniformly among all multigraphs (possibly with loops) that have $N_k$ vertices of degree $k$ for every $k \geq 3$. 
		\item Replace the edges of $\mathcal{K}$ by internally disjoint paths of i.i.d $Geom(1 - \mu)$ lengths. 
	\end{enumerate}
	Then $\mathcal{C}$ is contiguous to the model $\mathcal{\tilde{C}}$, that is, if $\Pr[\tilde{\C} \in \mathcal{A}] \rightarrow 0$ then $\Pr[\C \in \mathcal{A}] \rightarrow 0$ for any set of graphs $\mathcal{A}$. 
	
\end{theorem}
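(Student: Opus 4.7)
The plan is to establish Theorem~\ref{DLP} in three stages: identify the asymptotic degree sequence of the $2$-core, show that conditional on this sequence the $2$-core is uniformly distributed, and then decompose it into the kernel $\K$ together with the geometric subdivisions.

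For the first stage, I would analyze the $2$-core through the standard ``peeling'' procedure of iteratively removing degree-$1$ vertices. Classical results on supercritical $G(n,\lambda/n)$ (via either the exploration process or a branching-process approach) give that the giant component has $(1-\mu/\lambda)n + o(n)$ vertices, where $\mu<1$ is the dual parameter satisfying $\mu e^{-\mu}=\lambda e^{-\lambda}$. For a vertex $u$ in the giant, its $2$-core degree is obtained by keeping only those neighbors whose pruned subtree survives. Each such subtree dies with probability $\mu/\lambda$, the extinction probability of the Galton--Watson tree with $\mathrm{Po}(\lambda)$ offspring. A Poisson-thinning computation then shows that the $2$-core degree of $u$ is asymptotically $\mathrm{Po}(\lambda-\mu)$ conditional on being $\geq 2$, which matches exactly what the $\mathrm{Po}(\Lambda)$ prescription in $\widetilde{\C}$ gives once one restricts to vertices with $D_u\geq 2$. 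The Gaussian parameter $\Lambda\sim\mathcal{N}(\lambda-\mu,1/n)$ absorbs the CLT-scale fluctuations in the exact total number of vertices that survive peeling, and the parity condition on $\sum_u D_u\mathbf{1}_{D_u\geq 3}$ is forced by double-counting the edges incident to the kernel vertices.

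For the second stage, the classical fact that $G(n,p)$ conditional on its degree sequence is uniformly distributed over simple graphs with that sequence transfers (after some bookkeeping about the attached trees) to the statement that the $2$-core, conditional on its degree sequence, is a uniform multigraph with the prescribed degrees. For the third stage, observe that along any maximal path of degree-$2$ vertices of the $2$-core emanating from a kernel vertex, each successive vertex has degree $2$ independently with probability $\mu$ and degree $\geq 3$ with probability $1-\mu$ (in the limiting i.i.d.\ model), so the path length is geometric with parameter $1-\mu$. Contracting these paths then recovers the kernel multigraph $\K$, which inherits uniformity on multigraphs with the correct kernel degree sequence from the conditional uniformity of the $2$-core.

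The main obstacle is upgrading these ``convergence in distribution'' statements to the genuine contiguity claim --- that every event negligible under $\widetilde{\C}$ remains negligible under $\C$. Rather than matching distributions to $o(1)$ in total variation (which in general fails in this setting), one has to estimate Radon--Nikodym derivatives on high-probability sets up to multiplicative $1+o(1)$ factors. This requires sharp switching arguments and moment computations for uniform random graphs with prescribed degree sequences. Standard Chernoff-type tail bounds such as Lemma~\ref{Che} suffice to control concentration of the various degree statistics, but the delicate part is handling anomalous configurations (very high-degree vertices, multiple short cycles within $\K$, or clusters of exceptionally long subdivision paths) and showing that their probabilities match between $\C$ and $\widetilde{\C}$ up to subpolynomial factors.
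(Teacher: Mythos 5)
This theorem is quoted directly from Ding, Lubetzky and Peres~\cite{DLP} and is \emph{not} proved in the present paper --- it is imported as a black box, so there is no in-paper proof to compare your sketch against. Comparing instead against the actual DLP argument: your roadmap is qualitatively right about the easy ingredients --- the $\mathrm{Po}(\lambda-\mu)$ degree profile of the $2$-core arising from Poisson thinning by the extinction probability $\mu/\lambda$, the kernel-plus-geometric-subdivisions decomposition, the handshake origin of the parity constraint, and the CLT-scale role of the Gaussian $\Lambda$. You also correctly identify that the hard part is contiguity, not convergence in distribution. But your sketch does not actually supply that step. In DLP the contiguity is not obtained through generic ``Radon--Nikodym estimates'' or ``switching arguments''; it rests on a precise \emph{local limit theorem} for the joint law of the $2$-core's degree statistics and edge count (developed in the companion paper~\cite{DKLP} for the barely-supercritical window and extended to fixed $\varepsilon$), together with the exact conditional uniformity of the $2$-core given its degree sequence, and the observation that a uniform graph with a prescribed degree sequence, after contracting degree-$2$ paths, yields a uniform kernel with i.i.d.\ geometric path lengths conditioned on their total. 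Your phrase ``each successive vertex has degree $2$ independently with probability $\mu$'' is only heuristic and would need to be replaced by this configuration-model derivation. Absent the local CLT, the sketch stops exactly where the proof becomes nontrivial, so it cannot be regarded as a proof of the stated contiguity; it is, however, an accurate summary of what the proof would have to accomplish.
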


%\begin{remark}\label{re:NumOfEdgesInK}
%	Note that the number of edges in the multigraph $\K$ is at least $\frac 32 N$ since the minimal degree in $\K$ is at least 3. 
%\end{remark}

By $\text{Geom}(1 - \mu)$ we mean a  random variable which takes the value $k$ with probability $\mu^{k-1}(1 - \mu)$ for every $k \geq 1$. 
Throughout this section, the notation is the same as in Theorem \ref{DLP}. 
We assume that $\varepsilon$ is small enough where needed.  

%\begin{observation}\label{obs:mu}
%		For $\varepsilon>0$, let $\lambda=1+\varepsilon$ and $\mu e^{-\mu}=\lambda e^{-\lambda}$ where $0<\mu<1$. 
%	$\mu > 1-\varepsilon$.
%\end{observation}
\begin{claim}\label{claim:mu}
%{\color{blue}Let $\varepsilon\in (0,1)$ and $\lambda=1+\varepsilon$. Let $\mu<1$ such that $\mu e^{-\mu}=\lambda e^{-\lambda}$. Then,}
%$\mu = 1 - \varepsilon + O(\varepsilon^2)$.
$1 - \varepsilon < \mu < 1 - \varepsilon + \frac{2}{3}\varepsilon^2$. In particular 
$\mu = 1 - \varepsilon + O(\varepsilon^2)$.
\end{claim}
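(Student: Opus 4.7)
The plan is to substitute $\delta = 1 - \mu$ and rewrite the defining equation $\mu e^{-\mu} = \lambda e^{-\lambda}$ with $\lambda = 1+\varepsilon$ as $h(\delta) = g(\varepsilon)$, where $g(x) := (1+x)e^{-x}$ and $h(y) := (1-y)e^{y}$. A direct computation gives $g'(x) = -xe^{-x}$ and $h'(y) = -ye^{y}$, so both functions are strictly decreasing on $(0,1)$. Thus, to compare $\delta$ with a candidate value $y_0 \in (0,1)$, it suffices to compare $h(y_0)$ with $g(\varepsilon) = h(\delta)$: if $h(y_0) > g(\varepsilon)$ then $\delta > y_0$, and if $h(y_0) < g(\varepsilon)$ then $\delta < y_0$.

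For the lower bound $\mu > 1-\varepsilon$ (equivalently $\delta < \varepsilon$), I would show $g(\varepsilon) > h(\varepsilon)$. Taking logs,
$$\log\frac{g(\varepsilon)}{h(\varepsilon)} = \log(1+\varepsilon) - \log(1-\varepsilon) - 2\varepsilon = \frac{2\varepsilon^3}{3} + \frac{2\varepsilon^5}{5} + \cdots > 0,$$
and the monotonicity of $h$ yields $\delta < \varepsilon$.

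For the upper bound $\mu < 1-\varepsilon+\frac{2}{3}\varepsilon^2$ (equivalently $\delta > \varepsilon - \frac{2}{3}\varepsilon^2$), I would compute $h(\varepsilon - \frac{2}{3}\varepsilon^2) - g(\varepsilon)$ via Taylor expansion. Using
$$g(x) = 1 - \tfrac{x^2}{2} + \tfrac{x^3}{3} - \tfrac{x^4}{8} + O(x^5), \qquad h(y) = 1 - \tfrac{y^2}{2} - \tfrac{y^3}{3} - \tfrac{y^4}{8} + O(y^5),$$
substituting $y = \varepsilon - \frac{2}{3}\varepsilon^2$ (so $y^2 = \varepsilon^2 - \frac{4}{3}\varepsilon^3 + \frac{4}{9}\varepsilon^4$, $y^3 = \varepsilon^3 - 2\varepsilon^4 + O(\varepsilon^5)$, $y^4 = \varepsilon^4 + O(\varepsilon^5)$), the $\varepsilon^2$ and $\varepsilon^3$ contributions of $h(y)$ match exactly those of $g(\varepsilon)$, and the $\varepsilon^4$ coefficient works out to $\frac{23}{72} - (-\frac{1}{8}) = \frac{32}{72} = \frac{4}{9}$. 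Hence
$$h\!\left(\varepsilon - \tfrac{2}{3}\varepsilon^2\right) - g(\varepsilon) = \tfrac{4}{9}\varepsilon^4 + O(\varepsilon^5),$$
which is positive for $\varepsilon$ small enough, and monotonicity of $h$ gives $\delta > \varepsilon - \frac{2}{3}\varepsilon^2$.

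The main obstacle is that the upper bound is delicate: the candidate $y_0 = \varepsilon - \frac{2}{3}\varepsilon^2$ is precisely the second-order asymptotic expansion of $\delta$, so the inequality $h(y_0) > g(\varepsilon)$ is driven by the $\varepsilon^4$-term. One must carry the Taylor expansion to fourth order to extract the sign, and any lower-order estimate would only yield equality up to lower-order error. The lower bound, by contrast, is immediate from the familiar expansion of $\log(1+\varepsilon) - \log(1-\varepsilon)$.
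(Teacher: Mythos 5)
Your proof is correct, and at its core it uses the same monotonicity argument as the paper: the paper works with $f(x)=xe^{-x}$ increasing on $(0,1)$, and your $g(x)=(1+x)e^{-x}=ef(1+x)$ and $h(y)=(1-y)e^{y}=ef(1-y)$ are just that same function reparametrized around $1$, so the two are formally equivalent. Where you genuinely diverge is in how the scalar inequalities are verified. For the lower bound the paper reduces to $\frac{1-\varepsilon}{1+\varepsilon}e^{2\varepsilon}<1$ and asserts it; your log-series identity $\log(1+\varepsilon)-\log(1-\varepsilon)-2\varepsilon=\tfrac{2}{3}\varepsilon^3+\tfrac{2}{5}\varepsilon^5+\cdots>0$ is the same inequality with a fully explicit justification, valid on all of $(0,1)$. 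For the upper bound the paper claims the inequality holds for every $\varepsilon\in(0,1)$, deferring the verification to ``taking $\log$ of both sides and differentiating,'' whereas your fourth-order Taylor expansion yields $h\!\left(\varepsilon-\tfrac{2}{3}\varepsilon^2\right)-g(\varepsilon)=\tfrac{4}{9}\varepsilon^4+O(\varepsilon^5)$ and hence positivity only for $\varepsilon$ sufficiently small. This restricted range is a genuine weakening relative to what the claim literally states, but it is harmless in context: the section's standing convention is ``we assume that $\varepsilon$ is small enough where needed,'' and the proof of Theorem~\ref{thm:main2} explicitly reduces to $\varepsilon<\varepsilon_0$ at the outset. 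So your version trades a universal-range assertion (which the paper never actually details) for a clean, fully explicit local computation that exposes where the constant $\tfrac{2}{3}$ comes from — the $\varepsilon^2$- and $\varepsilon^3$-terms cancel exactly and the sign is decided at order $\varepsilon^4$, which is worth knowing.
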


\begin{proof}
	First we show that $\mu>1-\varepsilon$. Let $f:\mathbb R \to \mathbb R$ be the function $f(x)=xe^{-x}$. Then $f'(x)=(1-x)e^{-x}$ and $f'(x)>0$ for $x\in (0,1)$. Therefore, $f$ is increasing in $(0,1)$. We need to show that $f(\mu)>f(1-\varepsilon)$.	Since $f(\mu) =  f(1+\varepsilon)$, it is enough to show that $f(1-\varepsilon) < f(1+\varepsilon)$. This is equivalent to $\frac{1-\varepsilon}{1+\varepsilon}\cdot e^{2\varepsilon}< 1$, which is true for every $\varepsilon>0$. 
	
		Now we show will that $\mu < 1-\varepsilon+\frac 23\varepsilon^2$. 
		Since $\varepsilon \in (0,1)$ we have $1-\varepsilon+\frac 23\varepsilon^2<1$. 
		If we will show that $f(\mu) < f(1-\varepsilon+\frac 23\varepsilon^2)$, the claim will follow. Since $f(1+\varepsilon)=f(\mu)$, we need to show that $(1+\varepsilon)e^{-1-\varepsilon} < (1-\varepsilon+\frac 23\varepsilon^2)e^{-(1-\varepsilon+\frac 23\varepsilon^2)}$, which is equivalent to $\frac {1+\varepsilon}{1-\varepsilon+\frac 23\varepsilon^2} < e^{2\varepsilon-\frac 23\varepsilon^2}$.  This can be verified for every $\varepsilon\in (0,1)$ by taking $\log$ of both sides and differentiating. 
\end{proof}

\begin{corollary}\label{cor:Lambda}
holds \whp
\end{corollary}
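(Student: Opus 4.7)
The statement of Corollary~\ref{cor:Lambda} is truncated in the excerpt, but based on its name and the surrounding context it evidently asserts concentration of the Gaussian variable $\Lambda\sim\mathcal N(\lambda-\mu,1/n)$ appearing in Theorem~\ref{DLP}. The natural assertion to aim for is that $\Lambda = 2\varepsilon\bigl(1+o(1)\bigr)$, or equivalently $\Lambda = \Theta(\varepsilon)$, holds with high probability; this is precisely the estimate needed for the core-structure analysis in the proof of Theorem~\ref{thm:main2}, since the Poisson degree parameter $\Lambda$ controls the degree sequence of the skeleton $\mathcal K$.

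My plan is to combine Claim~\ref{claim:mu} with a one-line Gaussian tail bound. First, Claim~\ref{claim:mu} gives
\[
2\varepsilon - \tfrac{2}{3}\varepsilon^2 \;<\; \lambda-\mu \;<\; 2\varepsilon,
\]
so the mean of $\Lambda$ equals $2\varepsilon\bigl(1-O(\varepsilon)\bigr)$. For $\varepsilon$ fixed (or at worst tending to $0$ much slower than $n^{-1/2}$, which is the regime considered throughout), this mean dominates the standard deviation $1/\sqrt n$ by an arbitrarily large polynomial factor in $n$.

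Next, I would apply the standard Gaussian tail estimate
\[
\Pr\!\left[\,|\Lambda - (\lambda-\mu)| > \tfrac{\omega(n)}{\sqrt n}\,\right] \;\leq\; 2\exp\!\left(-\tfrac{\omega(n)^2}{2}\right)
\]
with any $\omega(n)\to\infty$, say $\omega(n) = \log n$. This gives $|\Lambda - (\lambda-\mu)| = O(n^{-1/2}\log n)$ w.h.p., and since $\lambda-\mu = \Theta(\varepsilon) \gg n^{-1/2}\log n$ (for fixed $\varepsilon$ and $n$ large), we conclude $\Lambda = (\lambda-\mu)\bigl(1+o(1)\bigr) = 2\varepsilon\bigl(1+o(1)\bigr)$ w.h.p.

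There is no real obstacle here: once Claim~\ref{claim:mu} has pinned down $\lambda-\mu$, Corollary~\ref{cor:Lambda} is an immediate consequence of Gaussian concentration, and the only thing to be slightly careful about is the regime of $\varepsilon$ relative to $n$. Since the paper works with $\varepsilon\in(0,1)$ fixed, this is automatic. The corollary will then be used in the sequel to replace the random quantity $\Lambda$ by the deterministic value $2\varepsilon$ (up to lower-order terms) when analyzing the degree distribution of the $2$-core via Theorem~\ref{DLP}.
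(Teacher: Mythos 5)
Your proposal is correct and follows essentially the same route as the paper: both proofs reduce to Claim~\ref{claim:mu} to pin down $\mathbb{E}[\Lambda] = \lambda - \mu = 2\varepsilon + O(\varepsilon^2)$ and then invoke concentration of $\Lambda$ around its mean. The only (cosmetic) difference is the concentration inequality used: the paper applies Chebyshev's inequality with a fixed threshold $\varepsilon^2$, getting $\Pr[|\Lambda - \mathbb{E}[\Lambda]| \geq \varepsilon^2] \leq 1/(n\varepsilon^4) \to 0$, whereas you use the sharper Gaussian tail bound with a shrinking threshold $\omega(n)/\sqrt n$; both deliver the needed conclusion that $\Lambda = 2\varepsilon + O(\varepsilon^2)$ w.h.p.
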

\begin{proof}
By Claim \ref{claim:mu} we have 
$\mathbb{E}[\Lambda] = \lambda - \mu = 
1 + \varepsilon - (1 - \varepsilon + O(\varepsilon^2)) = 2\varepsilon + O(\varepsilon^2)$. Since $\text{Var}[\Lambda] = \frac{1}{n}$, Chebyshev's inequality gives that $\Pr\left[ |\Lambda - \mathbb{E}[\Lambda]| \geq \varepsilon^2 \right] \underset{n \rightarrow \infty}{\longrightarrow} 0$. 
\end{proof}

\begin{claim}\label{claim:cond_even_sum}
Suppose that $\Lambda \in (0,1)$. Then
$\Pr\left[ \sum_{u=1}^{n}{D_u 1_{D_u \geq 3}} \text{ is even}\right] \geq \nolinebreak \frac{1}{2}$. 
\end{claim}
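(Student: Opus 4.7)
The plan is to use the standard parity trick: for any integer-valued random variable $S$, we have
\[
\Pr[S \text{ is even}] = \tfrac{1}{2}\bigl(1 + \mathbb{E}[(-1)^S]\bigr),
\]
so it suffices to show that $\mathbb{E}\bigl[(-1)^{\sum_u D_u \mathbf{1}_{D_u \geq 3}}\bigr] \geq 0$. Conditional on the value of $\Lambda$, the $D_u$ are i.i.d., so the expectation factors as a product of $n$ identical terms, and it is enough to prove that each factor $\mathbb{E}[(-1)^{D\mathbf{1}_{D\geq 3}}]$ is nonnegative, where $D \sim \text{Poisson}(\Lambda)$.

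To compute this factor I would split the sum $\sum_{k\geq 0}\frac{e^{-\Lambda}\Lambda^k}{k!}\cdot (-1)^{k\mathbf{1}_{k\geq 3}}$ into its $k\in\{0,1,2\}$ part (where the sign is $+1$) and its $k\geq 3$ part (where the sign is $(-1)^k$). Using $\sum_{k\geq 0}\frac{(-\Lambda)^k}{k!}=e^{-\Lambda}$ to evaluate the tail $\sum_{k\geq 3}(-1)^k\Lambda^k/k! = e^{-\Lambda} - (1 - \Lambda + \Lambda^2/2)$, the two cancellations leave the clean expression
\[
\mathbb{E}\bigl[(-1)^{D\mathbf{1}_{D\geq 3}}\bigr] = e^{-\Lambda}\bigl(2\Lambda + e^{-\Lambda}\bigr) = 2\Lambda e^{-\Lambda} + e^{-2\Lambda}.
\]

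For $\Lambda \in (0,1)$ (in fact for any $\Lambda > 0$) this quantity is strictly positive, so the product of $n$ copies is positive, and we conclude
\[
\Pr\Bigl[\sum_{u=1}^n D_u \mathbf{1}_{D_u \geq 3} \text{ is even}\Bigr] = \tfrac{1}{2}\Bigl(1 + \bigl(2\Lambda e^{-\Lambda} + e^{-2\Lambda}\bigr)^n\Bigr) \geq \tfrac{1}{2}.
\]

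There is no real obstacle here; the only thing that requires a tiny bit of care is the bookkeeping in the series computation (making sure the small-$k$ corrections are handled correctly so that the identity $\sum_{k\geq 0}(-\Lambda)^k/k! = e^{-\Lambda}$ can be applied to close the sum). The hypothesis $\Lambda \in (0,1)$ is in fact more than one needs for this step — positivity of $\Lambda$ alone suffices — and matches the regime in which the claim is later applied via Corollary~\ref{cor:Lambda}.
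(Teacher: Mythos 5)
Your proof is correct, but it takes a genuinely different route from the paper's. The paper proves the claim by induction on $m$: it sets $p_m = \Pr[\sum_{u\le m} D_u \mathbf{1}_{D_u\ge 3} \text{ even}]$, shows $p_1 \ge \tfrac{1}{2}$ in the base case by observing that $\Lambda \in (0,1)$ makes the Poisson pmf strictly decreasing (so the ``bad'' values $\{3,5,7,\dots\}$ can be injectively dominated by $\{2,4,6,\dots\}$), and then closes the induction with the recurrence $p_m = (2p_1 - 1)p_{m-1} + 1 - p_1$. You instead use the parity identity $\Pr[S \text{ even}] = \tfrac{1}{2}\bigl(1 + \mathbb{E}[(-1)^S]\bigr)$, factor the expectation over the i.i.d.\ coordinates, and compute the single-variable factor in closed form as $2\Lambda e^{-\Lambda} + e^{-2\Lambda}$ via the exponential series; your bookkeeping is right and the resulting expression is manifestly positive. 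Each approach has its advantages: the paper's induction is elementary and needs no generating-function machinery, while yours is shorter, yields the exact value $\Pr[S\text{ even}] = \tfrac{1}{2}\bigl(1 + (2\Lambda e^{-\Lambda} + e^{-2\Lambda})^n\bigr)$, and makes transparent that the hypothesis $\Lambda \in (0,1)$ is stronger than needed (positivity of $\Lambda$ suffices) --- whereas the paper's base case genuinely uses $\Lambda < 1$ to get pmf monotonicity. Your formula also shows the probability decreases to $\tfrac{1}{2}$ as $n\to\infty$, a refinement the inductive proof does not surface.
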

\begin{proof}
For $1 \leq m \leq n$, define $p_m$ to be the probability that 
$\sum_{u=1}^{m}{D_u 1_{D_u \geq 3}}$ is even.
We prove by induction on $m$ that $p_m \geq \frac{1}{2}$. Evidently, $p_1$ is the probability that $\text{Poisson}(\Lambda)$ is smaller than $3$ or even. Since $\Lambda \in (0,1)$, we have 
$\Pr[\text{Poisson}(\Lambda) = k] > \Pr[\text{Poisson}(\Lambda) = k+1]$ for every $k$, implying that $p_1 \geq \frac{1}{2}$. For $m \geq 2$, by conditioning on the value of $D_m$ we get 
$$p_m = 
\Pr[ D_m < 3 \text{ or } D_m \text{ even} ] \cdot p_{m-1} + 
\Pr[D_m \geq 3 \text{ and odd}] \cdot (1-p_{m-1}) = 
p_1p_{m-1} + (1-p_1)(1-p_{m-1}) 
.$$
By the induction hypothesis $p_{m-1} \geq \frac{1}{2}$ we get 
$p_m = (2p_1 - 1)p_{m-1} + 1 - p_1 \geq \frac{1}{2}$. 
\end{proof}
%\begin{corollary}\label{cor:K_properties}
%The following hold \whp
%\begin{enumerate}
%\item $v(\K) = (\frac{4\varepsilon^3}{3} + O(\varepsilon^4))n$.
%\item $\left| v \in \K : d_{\K}(v) \geq 4 \right| = O(\varepsilon^4) n$.  
%\end{enumerate}
%\end{corollary}
\begin{corollary}\label{cor:size_of_K}
\Whp we have 
% $v(\K) = \big(\frac{4\varepsilon^3}{3} + O(\varepsilon^4) \big)n$ and
$e(\K) = (2\varepsilon^3 + O(\varepsilon^4) )n$.
\end{corollary}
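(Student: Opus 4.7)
The plan is to note first that $e(\K) = \frac12 \sum_{k\geq 3} k N_k = \frac12 S$, where $S := \sum_{u=1}^n D_u \mathbf{1}_{D_u \geq 3}$, so it suffices to show $S = (4\varepsilon^3 + O(\varepsilon^4))n$ \whp. I will argue after conditioning on the value of $\Lambda$, since by Corollary \ref{cor:Lambda} \whp $\Lambda = 2\varepsilon + O(\varepsilon^2)$, and then convert back at the end.

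Fix a realization of $\Lambda$ with $\Lambda = 2\varepsilon + O(\varepsilon^2)$. Write $D$ for a $\text{Poisson}(\Lambda)$ variable. First I would compute the conditional expectation via
\[
\mathbb{E}[D \mathbf{1}_{D\geq 3}] \;=\; \Lambda \cdot \Pr[\text{Poisson}(\Lambda) \geq 2] \;=\; \Lambda\bigl(1 - e^{-\Lambda}(1+\Lambda)\bigr),
\]
using the identity $\sum_{k\geq 3} k \cdot \Lambda^k/k! = \Lambda \sum_{j\geq 2} \Lambda^j/j!$. Taylor-expanding $e^{-\Lambda}(1+\Lambda) = 1 - \Lambda^2/2 + \Lambda^3/3 + O(\Lambda^4)$ yields $\mathbb{E}[D\mathbf{1}_{D\geq 3}] = \Lambda^3/2 + O(\Lambda^4) = 4\varepsilon^3 + O(\varepsilon^4)$, and hence $\mathbb{E}[S \mid \Lambda] = (4\varepsilon^3 + O(\varepsilon^4))\,n$.

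Next I would bound the variance. Since $\Pr[D=k]$ decays faster than geometrically for $k\geq 3$ when $\Lambda < 1$, a direct calculation gives $\mathbb{E}[D^2 \mathbf{1}_{D\geq 3}] = O(\Lambda^3) = O(\varepsilon^3)$ (the $k=3$ term dominates). By the independence of the $D_u$'s conditional on $\Lambda$, $\Var(S \mid \Lambda) = O(n\varepsilon^3)$. Chebyshev's inequality applied with deviation $\varepsilon^4 n$ yields
\[
\Pr\bigl[\,|S - \mathbb{E}[S \mid \Lambda]| \geq \varepsilon^4 n \,\big|\, \Lambda\bigr] \;\leq\; \frac{O(n\varepsilon^3)}{\varepsilon^8 n^2} \;=\; O\!\left(\frac{1}{\varepsilon^5 n}\right) \;\longrightarrow\; 0,
\]
since $\varepsilon$ is fixed. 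Combining with the \whp bound on $\Lambda$, we get that \whp (in the \emph{unconditional} distribution of $(D_u)$ without the even-sum conditioning) $S = (4\varepsilon^3 + O(\varepsilon^4))n$.

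Finally, I need to remove the conditioning on $\sum_u D_u \mathbf{1}_{D_u \geq 3}$ being even, which is built into the definition of the model. Here Claim \ref{claim:cond_even_sum} does the job: the even-sum event has unconditional probability at least $1/2$ (whenever $\Lambda \in (0,1)$, which holds \whp), and therefore any event of unconditional probability $1 - o(1)$ also has conditional probability $1 - o(1)$. So $S = (4\varepsilon^3 + O(\varepsilon^4))n$ \whp in the model $\widetilde{\C}$ as well, and dividing by $2$ gives the claim. The only mildly delicate step is keeping track of the error terms through the Taylor expansion and through the substitution $\Lambda = 2\varepsilon + O(\varepsilon^2)$, so as to confirm that the $\Lambda^4$ and lower-order terms really contribute only $O(\varepsilon^4)n$ to $e(\K)$; all other steps are routine concentration.
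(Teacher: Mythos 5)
Your proposal is correct and follows essentially the same route as the paper's proof: condition on $\Lambda=2\varepsilon+O(\varepsilon^2)$ via Corollary~\ref{cor:Lambda}, compute $\mathbb{E}[D\,1_{D\geq 3}]=\tfrac12\Lambda^3+O(\Lambda^4)$, apply Chebyshev to $S=\sum_u D_u 1_{D_u\geq 3}$, and remove the even-sum conditioning using Claim~\ref{claim:cond_even_sum}. The only cosmetic differences are your cleaner identity $\mathbb{E}[D\,1_{D\geq 3}]=\Lambda\Pr[\mathrm{Poisson}(\Lambda)\geq 2]$ and your sharper variance bound $O(n\varepsilon^3)$ (the paper settles for $O(n)$, which suffices since $\varepsilon$ is fixed).
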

\begin{proof}
We condition on the value of $\Lambda$ and assume that 
$\Lambda = 2\varepsilon + O(\varepsilon^2)$, which holds \whp by Corollary \ref{cor:Lambda}. For $D \sim \text{Poisson}(\Lambda)$ we have 
%$\Pr[D \geq 3] = e^{-\Lambda} \cdot 
%\left( e^{\Lambda} - 1 - \Lambda - \frac{\Lambda^2}{2} \right) = \frac{\Lambda^3}{6} + O(\Lambda^4) = \frac{4\varepsilon^3}{3} + O(\varepsilon^4)$, and similarly 
$$\mathbb{E}[D \cdot 1_{D_\geq 3}] = e^{-\Lambda} \sum\limits_{k=3}^{\infty}{\frac{\Lambda^k}{k!}k} = (1+O(\Lambda)) \left( 0.5\Lambda^3 + O(\Lambda^4)\right) = 0.5\Lambda^3 + O(\Lambda^4) = 4\varepsilon^3 + O(\varepsilon^4).$$   

%Set $X := \sum_{u=1}^{n}{1_{D_u \geq 3}}$ and $Y := \sum_{u=1}^{n}{D_u 1_{D_u \geq 3}}$ and recall that $v(\K) = X$ and $e(\K) = Y$, where in both cases we condition on the event that $X$ is even. 
Set $X := \sum_{u=1}^{n}{D_u 1_{D_u \geq 3}}$.
We have 
%$\mathbb{E}[X] = \big( \frac{4\varepsilon^3}{3} + O(\varepsilon^4) \big)n$ and 
$\mathbb{E}[X] = (4\varepsilon^3 + O(\varepsilon^4) )n$. Note that $\text{Var}(D_u1_{D_u\geq 3})=O(1)$ and by independence $\text{Var}(X)=O(n)$. By  Chebyshev's inequality we have 
$$\Pr[|X - \mathbb{E}[X]| > \varepsilon^4 n] \leq \frac {\text{Var}(X)}{\varepsilon^8n^2} = o(1),$$
 and this bound is uniform for all $\Lambda$ in the range $2\varepsilon + O(\varepsilon^2)$.
Since $\Pr\left[ X \text{ is even}\right] \geq \frac{1}{2}$ (by Claim \ref{claim:cond_even_sum}) we have 
$$\Pr\big[ |X - \mathbb{E}[X]| > \varepsilon^4 n \; \big| \; X \text{ is even}\big] \leq \frac {\Pr\big[ |X - \mathbb{E}[X]| > \varepsilon^4 n \big]} {\Pr [X \text{ is even}]} =  o(1).$$
Thus $\Pr\big[ X = ( 4\varepsilon^3 + O(\varepsilon^4) )n \; \big| \; X \text{ is even}\big] = 1 - o(1)$. Since $e(\K) = \frac{X}{2}$ conditioned on $X$ being even (recall Theorem \ref{DLP}), we are done.
\end{proof}

\section{Proof of Theorem \ref{thm:main2}}

\noindent 
The following lemma is the main part of the proof of the lower bound of Theorem \ref{thm:main2}.

\begin{lemma}\label{lem:main}
	Let  $\K$ be a multigraph satisfying $e(\K) \geq \frac {3}{2}v(\K)$ and let $0.99\leq \mu<1$. 
%	($C_1$ is an absolute constants, not depending on $\varepsilon$). 
	Replace the edges of $\mathcal{K}$ by paths of i.i.d. $Geom(1 - \mu)$ lengths and denote this new multigraph by $\tilde\C$. Then \whp we have 
	$\dist ( \tilde{\C} ) \geq 0.001 e(\K)$.
\end{lemma}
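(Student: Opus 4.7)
My plan is to reduce $\dist(\tilde\C)$ to a combinatorial optimization problem on the kernel $\K$ with random edge parities, and then apply a concentration bound combined with a union bound.

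The first step is a structural reduction. Since the paths subdividing the edges of $\K$ are internally disjoint, every internal vertex of a subdivided path has degree $2$ in $\tilde\C$. Hence, given any 2-coloring $c : V(\K) \to \{0,1\}$ of the kernel vertices, one can extend it optimally to the internal vertices: along a subdivided path of length $\ell_e$ between kernel endpoints $u,v$, the maximum number of cut edges equals $\ell_e$ if $\ell_e \equiv c_u + c_v \pmod 2$ and $\ell_e - 1$ otherwise (and likewise for loops, in which case $c_u = c_v$). Setting $b_e := \ell_e \bmod 2$, summing over edges yields the identity
\[
\dist(\tilde\C) \;=\; \min_{c : V(\K) \to \{0,1\}} X_c, \qquad X_c \;:=\; \#\bigl\{ e \in E(\K) : b_e \not\equiv c_u + c_v \pmod 2 \bigr\}.
\]

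Next I would analyze $X_c$ for a fixed $c$. Since the $\ell_e$'s are i.i.d.\ $\mathrm{Geom}(1-\mu)$, the bits $b_e$ are independent Bernoullis with $\Pr[b_e = 1] = (1-\mu)/(1-\mu^2) = 1/(1+\mu)$. For $\mu \geq 0.99$, both $1/(1+\mu)$ and $\mu/(1+\mu)$ lie in $[0.497, 0.503]$, so each edge contributes to $X_c$ with probability at least $\mu/(1+\mu) > 0.497$, giving $\mathbb{E}[X_c] > 0.497\, e(\K)$. By Hoeffding's inequality for sums of independent $[0,1]$-bounded variables,
\[
\Pr\bigl[X_c < 0.001\, e(\K)\bigr] \;\leq\; \exp\bigl(-2(0.496)^2 \, e(\K)\bigr) \;<\; e^{-0.492 \, e(\K)}.
\]
Finally, using $v(\K) \leq \tfrac{2}{3}e(\K)$, a union bound over the $2^{v(\K)}$ possible colorings gives
\[
\Pr\bigl[\dist(\tilde\C) < 0.001\, e(\K)\bigr] \;\leq\; 2^{v(\K)} \cdot e^{-0.492\, e(\K)} \;\leq\; e^{((2\log 2)/3 - 0.492)\, e(\K)} \;<\; e^{-0.03 \, e(\K)} \;=\; o(1),
\]
since $e(\K) \to \infty$ in the intended application.

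The main obstacle is the tightness of the constants: the union bound over kernel 2-colorings costs roughly $e^{0.462\, e(\K)}$, which nearly cancels the $e^{-0.493\, e(\K)}$ Hoeffding gain. The scant margin of about $0.03$ in the exponent relies crucially on \emph{both} hypotheses of the lemma: the density condition $e(\K) \geq \tfrac{3}{2}v(\K)$ (to keep the number of colorings small enough relative to the number of independent random bits) and $\mu \geq 0.99$ (which keeps $\Pr[b_e = 1]$ within $O(1-\mu)$ of $\tfrac{1}{2}$, so that $\mathbb{E}[X_c] \approx \tfrac{1}{2} e(\K)$ uniformly in $c$ and the per-coloring Hoeffding deviation is nearly optimal).
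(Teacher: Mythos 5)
Your argument is correct and follows essentially the same route as the paper: reduce to 2-colorings of the kernel $\K$, observe that the edge parities $\ell_e \bmod 2$ are i.i.d.\ with probabilities within $0.01$ of $\tfrac12$ when $\mu \geq 0.99$, apply Hoeffding for a fixed coloring, and close with a union bound over $2^{v(\K)}$ colorings using $v(\K) \leq \tfrac23 e(\K)$. The only (inessential) refinement is that you prove the exact identity $\dist(\tilde\C) = \min_c X_c$, whereas the paper is content with the one-sided inequality $\dist(\tilde\C) \geq \min_c X_c$, which is all that is needed.
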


\begin{proof}
	Denote by $\ell_e$ the length of the path that replaces the edge 
	$e \in E(\K)$ in $\tilde{\C}$, and by $P_e$ the set of edges of this path. Note that  
	$$p_{even}:=\Pr(\ell_e\ is\ even)=
%	\sum_{k = 1}^{\infty}\Pr(\ell_e=2k)=
	\sum_{k = 1}^{\infty}\mu^{2k-1}(1-\mu)= \frac {\mu}{1+\mu} < \frac{1}{2}$$ and
 $$p_{odd}:=\Pr(\ell_e\ is\ odd)=1-\Pr(\ell_e\ is\ even)=\frac {1}{1+\mu}<0.51.$$
  The last inequality holds by our assumption that $\mu \geq 0.99$.
% Furthermore, $$p_{even}=1-\frac{1-\mu}{1-\mu^2}=\frac {\mu}{1+\mu}>0.49$$ if $\mu<1$ is large enough (that is, $\varepsilon$ is small enough). \
We conclude that $p_{odd},p_{even} \in (0.49,0.51)$.
	
	It is clear that if $\dist( \tilde{\C} ) < 0.001 e(K)$ then there is a bipartition 
	$V(\tilde{\C}) = \tilde{V_1} \uplus \tilde{V_2}$ satisfying 
	$e(\tilde V_1)+e(\tilde V_2) < \nolinebreak 0.001 e(K)$.
	Let $V(\K) = V_1\uplus V_2$ be a bipartition of $\K$. We will bound the probability that $V(\tilde{\C})$ has a bipartition $\tilde{V_1} \uplus \tilde{V_2}$ that extends $V_1\uplus V_2$ (i.e. $V_1 \subseteq \tilde{V_1}$ and $V_2 \subseteq \tilde{V_2}$) and satisfies 
	$e(\tilde V_1)+e(\tilde V_2)< 0.001 e(K)$. 
%	We will then take the union bound over all bipartitions of $\K$. Since every bipartition of $\tilde{\C}$ extends some bipartition of $\K$, the union bound gives a bound on the probability that $\dist( \tilde{\C} ) < \alpha \cdot v(\K)$.
	
	\noindent
	Let $e\in \K$. Consider the following events:
	\begin{enumerate}[label=(\alph*)]
	\item $e \in E(V_1) \cup E(V_2)$ (that is, $e$ lies inside $V_1$ or inside $V_2$) and $\ell_e$ is odd.
	\item $e \in E(V_1,V_2)$ and $\ell_e$ is even.
	\end{enumerate}
%	Note that if an edge $e\in \K$ lies inside $V_1$ or inside $V_2$ and $\ell_e$ is odd, then for every bipartition 
%	$V(\tilde{C}) = \tilde{V_1} \uplus \tilde{V_2}$ which extends 
%	$V_1\uplus V_2$, at least one of the edges in $P_e$ must lie inside either $\tilde{V_1}$ or $\tilde{V_2}$. Similarly, if an edge $e\in \K$ is in $E(V_1,V_2)$ and $\ell_e$ is even, then for every bipartition $V(\tilde{C}) = \tilde{V_1} \uplus \tilde{V_2}$, at least one of the edges in $P_e$ must lie inside either $\tilde{V_1}$ or $\tilde{V_2}$. 
	Observe that if (a) or (b) happen then for every bipartition $V(\tilde{\C}) = \tilde{V_1} \uplus \tilde{V_2}$ which extends 
	$V_1 \uplus V_2$, at least one of the edges in $P_e$ must lie inside either $\tilde{V_1}$ or $\tilde{V_2}$. 
	If (a) or (b) happen then we call $e$ \textit{bad}.
%	Now look at a specific partition $V_1\uplus V_2$ of $\K$. 
	We conclude that for any bipartition 
	$V(\tilde{\C}) = \tilde V_1 \uplus \tilde V_2$ which extends $V_1 \uplus V_2$, $e(\tilde V_1)+e(\tilde V_2)$ is at least the number of bad edges of $\K$. Let us denote this number by $X$. Since $p_{odd},p_{even} \geq 0.49$, every $e \in E(\K)$ is bad with probability at least $0.49$. Thus, $X$ stochastically dominates the distribution $\Bin(e(\K), 0.49)$. By Lemma \ref{Hof} (with $t=0.489$) we have
% 	$\delta =  0.009 \cdot e(\K)/n$, by Lemma \ref{Hof} (where $t=0.481$)
%	$m =  0.009 e(K)$, by Lemma \ref{Hof} (where $t=0.481$)
%	$$\Pr\left[X<\delta n\right]\leq  \Pr\left[Y<\delta n\right]\leq \Pr\left[Y<0.009\cdot e(\K)\right]< 2e^{-2(0.481)^2e(\K)}<2e^{-0.694v(\K)}.$$
	$$ \Pr\left[X < 0.001 e(K)\right] \leq 
	\Pr\left[\Bin(e(\K), 0.49) < 0.001 e(K)\right] \leq 
	 2e^{-2 \cdot (0.489)^2e(\K)}.$$
%	\Pr\left[Y<\alpha\cdot e(\K)\right]< 2e^{-2(0.481)^2e(\K)}<2e^{-0.694v(\K)}.$$
%	Note that every partition $\tilde V_1\uplus \tilde V_2$ of $\tilde{\C}$ is an extension of some partition $V_1\uplus V_2$ of $\K$.  
By the union bound over all partitions $V_1\uplus V_2$ of $\K$ we get: the probability that there exists a partition $\tilde V_1\uplus \tilde V_2$ of $\tilde{\C}$ with $e(\tilde V_1)+e(\tilde V_2) < 0.001 e(\K)$ is at most
$$2^{v(\K)} \cdot 2e^{-2(0.489)^2e(\K)} \leq 
2e^{\left( \frac{2\log 2}{3} - 2 \cdot (0.489)^2 \right) e(\K)}=o(1).$$ 
In the first inequality we used the assumption that $e(\K) \geq \frac{3}{2}v(\K)$.
%
%	 
%	 Different calculation:\\
%	 Now, if $\tilde{\C}$ is $\delta$-close to being bipartite, then there is a set $F \subseteq E(\K)$ of size $|F| \geq e(\K) - \delta n$, such that for every $e \in F$, if $e \in E(V_1) \cup E(V_2)$ then $\ell_e$ is even and if $e \in E(V_1,V_2)$ then $\ell_e$ is odd. The probability for that is at most
%	$$\binom {e(\K)}{\delta n}\cdot 0.51^{e(\K)-\delta n}.$$ 
%	Note that every partition $U_1\uplus U_2$ of $\tilde{\C}$ is an extension of some partition $V_1\uplus V_2$ of $\K$. 
%	Therefore, by the union bound over all partitions $V_1\uplus V_2$, we have that the probability that there exists a partition $U_1\uplus U_2$ in $\tilde{\C}$ such that there are less than $\delta n$ edges inside $E(U_1)\cup E(U_2)$ is at most (for $\delta n= e(\K)/c$ and $e(\K)\leq Cv(\K)$)
%	
%	\begin{align*}
%	2^{v(\K)}\binom {e(\K)}{\delta n}\cdot 0.51^{e(\K)-\delta n} 
%	&\leq 2^{v(\K)}\binom {e(\K)}{\delta n}\cdot 0.51^{e(\K)-\delta n} \\
%	&\leq 2^{v(\K)}\left(e\cdot c\right)^{\delta n}\cdot 0.51^{(1-\frac 1c) e(\K)}\\
%	& \leq 2^{v(\K)}2^{\log (ce)\delta n} 0.51^{(1-\frac 1c) \frac 32 v(\K)}\\
%	&= 2^{v(\K)+\frac {\log (ce)}ce(\K) } 0.51^{(1-\frac 1c) \frac 32 v(\K)}\\
%	&\leq 2^{v(\K)+\frac {2C\log c}cv(\K) } 0.51^{(1-\frac 1c) \frac 32 v(\K)}\\
%	&\leq 2^{(1+\frac {2C\log c}c)v(\K) } 0.51^{(1-\frac 1c) \frac 32 v(\K)}=o(1)\\
%	\end{align*}
%	
%	for $c$ which is large enough constant. 
%	
\end{proof}

\begin{proof}[Proof of Theorem \ref{thm:main2}]

% We will show that \whp the distance of 
% $G \sim G\left(n, \frac{1+\varepsilon}{n} \right)$ from bipartiteness is at least $\delta n$ for $\delta = \Theta(\varepsilon^3)$. This will imply the theorem as \whp we have $e(G) \leq \frac{1 + \varepsilon}{2}n + o(n) \leq n$. 
 We can assume that $\varepsilon < \varepsilon_0$ for some small constant $\varepsilon_0$. 
% (this assumption is needed to apply Lemma \ref{lem:main}). 
 Indeed, if we know the theorem for some $\varepsilon_0$ then for every 
 $\varepsilon \in (\varepsilon_0,1)$ and $G \sim G\left( n, \frac{1+\varepsilon}{n} \right)$ we have (by monotonicity) \whp 
 $\Omega(\varepsilon_0^3)n \leq \dist(G) \leq e(G) \approx \frac{1+\varepsilon}{2}n < n$. So by adjusting the implied constants in the $\Theta$-notation, we get the theorem for $\varepsilon > \varepsilon_0$ as well.  
% $G\sim G\left( n, \frac{1+\varepsilon_0}{n} \right)$ we have \whp 
% $\dist(G) \geq c\varepsilon_0^3 n$ (where $c$ is a constant), then by monotonicity, for every 
% $\varepsilon > \varepsilon_0$ and $G \sim G\left( n, \frac{1+\varepsilon}{n} \right)$ we have \whp $\dist(G) \geq c\varepsilon_0^3 n \geq c'\varepsilon^3 n$ (where $c'=c\varepsilon_0^3$). Therefore it is enough to prove Theorem \ref{thm:main2} for the case that $G\sim G(n,\frac {1+\varepsilon}n)$ and $\varepsilon$ is small enough. 
% Throughout the proof we assume that $\varepsilon$ is small enough where needed. 

 Consider the model $\tilde{\C}$ from Theorem \ref{DLP}. We prove that \whp
 $\dist( \tilde{\C} ) = \Theta(\varepsilon^3)n$. 
 Let $\K$ be the multigraph generated in item 1 of Theorem \ref{DLP}. By Claim \ref{cor:size_of_K} we have \whp $e(\K) = \Theta(\varepsilon^3) n$. Since all vertex-degrees in $\K$ are at least 3, we have $e(\K) \geq \frac{3}{2}v(\K)$. By Claim \ref{claim:mu}, $\mu \geq 0.99$ if $\varepsilon$ is sufficiently small. By Lemma \ref{lem:main} we get that \whp
 $\dist( \tilde{\C} ) \geq 0.001 e(\K) = \Omega(\varepsilon^3)n$. 
 
We now show that \whp $\dist( \tilde{\C} ) = O(\varepsilon^3)n$. For each $e \in E(\K)$, let $P_e$ be the path in $\tilde{\C}$ which replaces $e$ and let $e^*$ be an arbitrary edge of $P_e$. It is easy to see that for every cycle $C$ in $\tilde{\C}$ there is $e \in E(\K)$ such that $C$ contains all the edges of $P_e$. Moreover, if $C$ is an odd cycle then there is such $e \in E(\K)$ for which $|P_e|$ is odd. Thus, removing 
$\{e^* : e \in E(\K)\}$ from $\tilde{\C}$ leaves an acyclic graph, and removing 
$\{e^* : e \in E(\K), \; |P(e)| \text{ odd}\}$ leaves a bipartite graph. As in the proof of Lemma \ref{lem:main}, we have 
$p_{odd} := \Pr[ |P_e| \text{ is odd} ] = \frac{\mu}{1 + \mu} < \frac{1-\varepsilon + \frac{2}{3}\varepsilon^2}{2-\varepsilon} \leq \frac{1}{2} - \frac{\varepsilon}{8}$. Here we used Claim \ref{claim:mu} and assumed that $\varepsilon$ is small enough. Therefore, the random variable 
$X = \left| \left\{ e \in E(\K) : |P_e| \text{ is odd} \right\} \right|$ is stochastically dominated by $\Bin(e(\K),\frac{1}{2} - \frac{\varepsilon}{8})$. By Chernoff's Inequality (Lemma \ref{Che}), we have 
$\Pr[X > \frac{1}{2}e(\K)] = o(1)$. Thus, \whp
$\dist( \tilde{\C} ) \leq X \leq \frac{1}{2}e(\K) = O(\varepsilon^3)n$, using Corollary \ref{cor:size_of_K}.    

Now let $G \sim G\left( n, \frac{1+\varepsilon}{n} \right)$ and let $\C$ be the 2-core of the largest component of $G$. 
By Theorem \ref{DLP} and the fact that \whp $\dist( \tilde{\C} ) = \Theta(\varepsilon^3)n$ we get that \whp $\dist( \C ) = \Theta(\varepsilon^3)n$.
This already establishes that \whp
$\dist (G) = \Omega(\varepsilon^3)n$. 
To prove the likely upper bound on $\dist (G)$, we use the known fact that the expected number of cycles in $G$ not contained in the largest component is $O(1)$ (see, e.g., \cite{JLR}). Thus, \whp the number of these cycles is $o(n)$, implying that all components other than the largest can be made acyclic by omitting $o(n)$ edges. Moreover, \whp the 2-core of the largest component, $\C$, can be made bipartite by omitting $O(\varepsilon^3)n$ edges. Observe that every edge of the largest component which is not in its 2-core is not contained in any cycle and thus can be added to every cut of the 2-core. Therefore, there is a cut of the largest component, consisting of a maximum cut of the 2-core and all edges that are not in the 2-core. We conclude that $G$ can be made bipartite by omitting $O(\varepsilon^3)n$ edges, as required. 
\end{proof}

%{\color{red}There were many ``\whp" missing (so I added). I also changed $\tilde{C}$ to $\tilde{\C}$ for uniformity. please double check.}
%
%{\color{red}I think it is worth to explain why the lower bound works for every $\varepsilon$.}

%\begin{corollary}\label{cor:DisMonEdges}
%Let $\varepsilon$ be small enough, and let $G\sim G(n,\frac {1+\varepsilon}n)$. Then \whp for every 2-coloring of the vertices of $G$, there is a collection of $\Theta(\varepsilon^3)n$ pairwise vertex-disjoint monochromatic edges. 
%\end{corollary}
%
%\begin{proof}[Proof sketch]
%		Theorem 1.1 guarantees that in every 2-coloring of a typical $G \sim G\left( n, \frac{1+\varepsilon}{n} \right)$ there are at least $\Theta(\varepsilon^3)n$ monochromatic edges. In fact, the proof finds $\Theta(\varepsilon^3)n$ monochromatic edges, each lying on a path which connects two vertices of the kernel of the giant component of $G$. By Claim \ref{claim:highdegs}, the number of edges in the kernel that have an endpoint of kernel-degree greater than $3$ is $O(\varepsilon^4)n$. Ignoring these "bad" edges, we still have $\Theta(\varepsilon^3)n$ monochromatic edges with both endpoints of degree at most $3$ in the 2-core. Each such edge intersects at most $6$ other such edges, so there is a collection of $\Theta(\varepsilon^3)n$ pairwise vertex-disjoint monochromatic edges. 
%\end{proof}

\begin{remark}\label{re:algo}
	\emph{
The proof of the upper bound in Theorem \ref{thm:main2} is algorithmic in the sense that there is
a deterministic polynomial-time algorithm that \whp finds in 
$G\sim G(n,\frac {1+\varepsilon}n)$ a cut of size 
$e(G) - O(\varepsilon^3)n$. 
%$\left(1 - O\left(\varepsilon^3 \right) \right) e(G)$. 
%By Theorem \ref{thm:main2} this cut is \whp optimal up to an error of 
%$O\left(\varepsilon^3 \right) \cdot e(G)$. 
The algorithm follows the proof of the upper bound: first it finds the connected components of $G$. Then, for every connected component $C$ but the largest, the algorithm greedily deletes one edge to make $C$ bipartite (this is possible because \whp all components but the largest are unicyclic or trees). The algorithm then finds the 2-core of the largest component and deletes one edge from every path (here a cycle is thought of as a path with identical endpoints) in the 2-core whose internal vertices are all of degree $2$. 
%Finally, it adds all edges which are in the largest component but not in its 2-core. 
The remaining edges form a cut whose size is \whp $e(G) - O(\varepsilon^3)n$. 
}
\end{remark}

\begin{remark}
	\emph{
Using a similar technique, we can obtain an analogue of Theorem \ref{thm:main2} for random graphs in the supercritical phase, i.e. $p = \frac{1 + \varepsilon}{n}$ for $n^{-1/3} \ll \varepsilon \ll 1$. Instead of using Theorem \ref{DLP}, we need to use an analogous result that describes the structure of the giant component of random graphs in the supercritical phase, see \cite{DKLP}. In this manner we can prove that for $\varepsilon$ as above, a typical 
$G \sim  G\left( n, \frac{1 + \varepsilon}{n} \right)$ satisfies 
$\dist \left( G \right) = \Theta(\varepsilon^3)$.
This has already been shown (using a different proof technique) in \cite{MaxCutSupercritical}. 
}
\end{remark}

\section{Proof of Conjecture \ref{conj:Between23}}\label{sec:between23}
%\noindent
%Now we can deduce the following easily for $\ell_\varepsilon=O\left(\frac 1{\varepsilon^3}\right)$.

%\begin{theorem}[Conjecture 1 in \cite{Between23}]
%	For any $\varepsilon> 0$, there is an $\ell_\varepsilon$ such that with high probability, there is no
%	homomorphism from $G(n, \frac {1+\varepsilon}n)$
%	to $C_{2\ell+1}$ for any $\ell \geq \ell_\varepsilon$. In fact, 
%	$\ell_\varepsilon=O\left(\frac 1{\varepsilon^3}\right)$. 
%\end{theorem}

\begin{proof}[Proof of Theorem \ref{thm:homo}] 
		Observe that if $G$ is homomorphic to $C_{2\ell+1}$ then 
		$\dist(G) \leq \frac {e(G)}{2\ell+1}$. Indeed, assume that there exists a homomorphism $\varphi:G\to C_{2\ell+1}$. Let $v_1,\dots v_{2\ell +1}$ be the vertices of $C_{2\ell+1}$ and let $e_1,\dots ,e_{2\ell+1}$ be the edges of $C_{2\ell+1}$, where $e_i=\{v_i,v_{i+1}\}$ for every $i\in[2\ell]$ and $e_{2\ell+1}=\{v_{2\ell+1},v_1\}$. Since every edge of $G$ is mapped into one of the $e_i$-s, there exists 
		$i_0 \in \{1,\dots,2\ell + 1\}$ such that $\varphi$ maps at most $\frac{e(G)}{2\ell+1}$ edges to $e_{i_0}$. By erasing the edges mapped to $e_{i_0}$ we turn $G$ into a graph which is homomorphic to a path (of length $2\ell$) and hence bipartite. Thus, $\dist(G) \leq \frac {e(G)}{2\ell+1}$.
		
		Now let $G \sim G\left( n, \frac{1+\varepsilon}{n} \right)$. By Theorem \ref{thm:main2} and since \whp 
		$e(G) = \left( \frac{1+\varepsilon}{2} + o(1) \right)n$, we have that \whp $\dist(G) \geq \delta \cdot e(G)$ for 
		$\delta = \Theta(\varepsilon^3)$. Set 
		$\ell_{\varepsilon} = \frac 1{2\delta}$. Then 
		$\frac{1}{2\ell + 1} < \delta$ for every $\ell \geq \ell_{\varepsilon}$. By the previous paragraph, \whp $G$ is not homomorphic to $C_{2\ell+1}$ for any $\ell \geq \ell_{\varepsilon}$. 		
		
%	Let $\varepsilon>0$, let $\delta=\delta(\varepsilon)$ be as defined in Theorem \ref{thm:main2}, and let $\ell_\varepsilon=\frac 1{2\delta}$. Let $G\sim G(n,\frac {1+\varepsilon}n)$, by Theorem \ref{thm:main2}, \whp $G$ is $\delta$-far from being bipartite. Since $\ell\geq \frac 1{2\delta}$ we have $\frac 1{2\ell+1}<\frac 1{2\ell}\leq \delta$, so \whp $G$ is $\frac 1{2\ell}$-far from being bipartite and thus there is no homomorphism $\varphi:G\to C_{2\ell+1}$ for any $\ell\geq \ell_\varepsilon$.  This completes the proof.  
\end{proof}

\section{Proof of Theorem \ref{thm:2colorTour}}

Let $T \sim T\left( n, \frac{1-\varepsilon}{n} \right)$. 
For the lower bound, let $B$ be the backedge graph of $T$ (as defined in section \ref{sec:notation}). Then 
$B \sim G\left( n, \frac{1-\varepsilon}{n} \right)$. It is well known that
$\Pr\left[ \chi\left(G\left( n, \frac{1-\varepsilon}{n} \right)\right) \leq 2\right] \geq c_{\varepsilon} > 0$ (see, e.g., \cite{AchFri99,Bol98}). By Observation \ref{obs:backedge_chromatic} it follows that 
$\Pr[\chi(T) \leq 2] \geq \Pr[\chi(B) \leq 2] \geq c_{\varepsilon}$. 

%Recall that the {\em backedge graph} of $T$  is the graph $R$ on the vertices $[n]$ in which $ij \in E(R)$ if and only if $i<j$ and $j \rightarrow i$ in $T$. It is not hard to see that if $R$ is $k$-colorable then so is $T$. Indeed, an independent set in $R$ spans a transitive subtournament in $T$, since all the edges in it go in the same direction (forward). Moreover, 
%$R \sim G\left( n, \frac{1-\varepsilon}{n} \right)$. So $\Pr[\chi(T) \leq 2] \geq \Pr[\chi(R) \leq 2] \geq c_{\varepsilon}$. 

Using the same argument we can also explain Remark \ref{re:smallPro}. 
Indeed, it is easy to show that $B \sim G(n,p)$ is $2$-colorable \whp for 
$p= o\left(\frac 1n\right)$.

We now prove the upper bound in the theorem. Our strategy is to show that with probability at least $c'_{\varepsilon}$ (for $c'_{\varepsilon}$ to be determined later)
$T \sim T\left( n, \frac{1-\varepsilon}{n} \right)$ contains a small non-$2$-colorable subtournament. We consider the tournament $H$ with vertices ${1,...,7}$ and edges:\\ 
$1\rightarrow 2 \rightarrow 3 \rightarrow 1$, $4 \rightarrow 5 \rightarrow 6 \rightarrow 4$, $\{1,2,3\} \rightarrow \{4,5,6\} \rightarrow 7 \rightarrow \{1,2,3\}$. It is easy to verify the following.
\begin{observation}\label{obs:H}
$H$ has the following properties. 
\begin{enumerate}
\item $H$ is not $2$-colorable. 
\item In the ordering $1,\dots,7$, every $S \subseteq \{1,\dots,7\}$ spans at most $|S|$ backedges. 
\end{enumerate}
\end{observation}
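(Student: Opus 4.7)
For the first part, I would argue by contradiction: suppose $V(H) = A \uplus B$ is a 2-coloring, i.e., both $H[A]$ and $H[B]$ are transitive (equivalently, contain no directed triangle). Assume without loss of generality that $7 \in A$. The key observation is that for any $i \in \{1,2,3\}$ and any $j \in \{4,5,6\}$, the triple $\{7,i,j\}$ induces a directed 3-cycle $7 \to i \to j \to 7$. Hence $A$ cannot meet both $\{1,2,3\}$ and $\{4,5,6\}$, so $B$ contains all of $\{1,2,3\}$ or all of $\{4,5,6\}$. But each of these sets is itself a directed 3-cycle by construction, contradicting transitivity of $H[B]$.

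For the second part, I would simply list the backedges of $H$ in the ordering $1,\dots,7$ and read off the structure of the backedge graph $B(H)$. The inter-block arcs $\{1,2,3\}\to\{4,5,6\}\to 7$ are all forward, so the only backedges are $\{1,3\}$ and $\{4,6\}$ coming from the two small triangles, together with $\{1,7\}, \{2,7\}, \{3,7\}$ coming from $7 \to \{1,2,3\}$. Thus $B(H)$ decomposes into a pseudotree on $\{1,2,3,7\}$ (the triangle on $\{1,3,7\}$ together with a pendant edge to $2$), the isolated edge $\{4,6\}$, and the isolated vertex $5$. Every connected component of $B(H)$ contains at most one cycle, so $B(H)$ is a pseudoforest; since induced subgraphs of pseudoforests are pseudoforests, every $S \subseteq \{1,\dots,7\}$ satisfies $e_{B(H)}(S) \leq |S|$, which is exactly property 2.

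Neither part presents any real obstacle; the content is entirely in the design of $H$. The tournament is arranged so that the block-level tournament on $\{\{1,2,3\},\{4,5,6\},\{7\}\}$ is cyclic and both non-singleton blocks are themselves directed triangles --- this is what forces non-2-colorability --- while routing the cyclic block structure through the single extreme vertex $7$ keeps the backedge count down to $|V(H)|-2 = 5$, comfortably within the pseudoforest budget.
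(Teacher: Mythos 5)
Your proof is correct, and since the paper merely asserts the observation is ``easy to verify'' without supplying an argument, your write-up usefully fills that gap. For part 1, the observation that every triple $\{7,i,j\}$ with $i\in\{1,2,3\}$, $j\in\{4,5,6\}$ induces the 3-cycle $7\to i\to j\to 7$ immediately forces the part not containing $7$ to absorb one of the two inner 3-cycles, which is exactly the intended design of $H$. For part 2, the list of backedges $\{1,3\},\{4,6\},\{1,7\},\{2,7\},\{3,7\}$ is correct, and the reduction to the pseudoforest property (every component has at most one cycle, equivalently an orientation with outdegree at most 1 exists, equivalently $e(S)\le|S|$ for all $S$, and this is hereditary under taking induced subgraphs) is a clean way to dispatch all $2^7$ subsets at once rather than checking cases.
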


It is worth noting that \cite{Tnp} found, for every fixed oriented graph, the threshold for the appearance of this graph in $T(n,p)$. In our case, the threshold for the appearance of $H$ is $\frac{1}{n}$. We show that for $p = \frac{1-\varepsilon}{n}$ (namely, in the threshold), the probability of containing $H$ is bounded away from $0$. 
%This is analogous to the graph case (see e.g. \cite{JLR}).

For a $7$-tuple $1 \leq u_1<...<u_7 \leq n$, we say that $(u_1,...,u_7)$ is an 
{\em ordered copy} of $H$ in $T$ if the map $i \rightarrow u_i$ is an embedding of $H$ into $T$. Let $X$ be the number of ordered copies of $H$ in $T$. We will show that
$\Pr[X > 0] \geq c'_{\varepsilon} > 0$. 

Fix a $7$-tuple $1 \leq u_1<...<u_7 \leq n$. Since $H$ has $5$ backedges in the ordering $1,...,7$, the probability that $(u_1,\dots,u_7)$ is an ordered copy of $H$ is 
$p^5 (1-p)^{\binom{7}{2}-5}  = \big(1 - o(1) \big)p^5$. 
%For a $7$-tuple $U = (u_1,...,u_7)$, let $X_{U}$ be the indicator of the event that $(u_1,...,u_7)$ is an ordered copy of $H$. Since $H$ has $5$ backedges in the ordering $1,...,7$, we have $\Pr\left[ X_{U} = 1 \right] = p^5 (1-p)^{\binom{7}{2}-5}  = \big(1 - o(1) \big)p^5$. 
Therefore
\begin{equation*} % \label{1moment}
\mathbb{E}[X] = \binom{n}{7} (1 - o(1))p^5 = 
\frac{\big(1 - o(1) \big) \big(1 - \varepsilon \big)^5n^2}{7!} \geq 
\frac{\big(1 - \varepsilon \big)^5n^2}{10^4}.
\end{equation*} 
We now estimate $\mathbb{E}[X^2]$, the expected number of pairs of (not necessarily distinct) ordered copies of $H$. Fix two $7$-tuples $U_1$ and $U_2$ and put $k = |U_1 \cap U_2|$. 
%Clearly $0 \leq k \leq 7$. 
Observe that for every 7-tuple $U$ there is a unique orientation for which $U$ spans an ordered copy of $H$. Therefore,  there is at most one way to orient $E(U_1) \cup E(U_2)$ so that both $U_1$ and $U_2$ are ordered copies of $H$. 
Let $\ell$ be the number of backedges inside $U_1 \cap U_2$ in this orientation (if it exists). 
By Item 2 in Observation \ref{obs:H} we have $\ell \leq k$. The probability that $U_1$ and $U_2$ are both ordered copies of $H$ is 
$ p^5 (1-p)^{16} p^{5 - \ell} (1-p)^{16 - \left( \binom{k}{2} - \ell \right)} \leq p^{10 - \ell} \leq p^{10 - k}$. 
Therefore 
\begin{equation*} % \label{2moment}
\mathbb{E}[X^2] \leq 
\sum_{k = 0}^{7}{\binom{n}{7}\binom{7}{k}\binom{n-7}{7-k}p^{10-k}} \leq 
\sum_{k = 0}^{7}{n^{14-k}p^{10-k}} \leq 
\sum_{k = 0}^{7}{n^4} = 8n^4.
\end{equation*}
By the Paley-Zygmund inequality we get 
\begin{equation*}
\Pr[X > 0] \geq \frac{\mathbb{E}[X]^2}{\mathbb{E}[X^2]} \geq \frac{(1 - \varepsilon )^{10} }{10^9}.
\end{equation*}
%\begin{align*}
%\Pr[X > 0] &\geq 
%\frac{(1 - \varepsilon )^{10} }{10^9}.
%\binom{n}{7}^2 p^{10}}{\sum_{k = 0}^{7}{\binom{n}{7}\binom{7}{k}\binom{n-7}{7-k}p^{10-k}}} 
%= \frac{\big(1 - o(1) \big)\frac{n^7}{7!}}
%{\sum_{k = 0}^{7}{\binom{7}{k}\frac{n^{7-k}}{(7-k)!} p^{-k}}}\\
%& \geq 
%\frac{1 - o(1)}{(7!)^2 \sum_{k = 0}^{7}{n^{-k} p^{-k}}} =  
%\frac{1 - o(1)}{(7!)^2 \sum_{k = 0}^{7}{(1-\varepsilon)^{-k}}}\\
%& \geq
%\frac{\left( 1 - o(1) \right) (1-\varepsilon)^7}{8 \cdot (7!)^2}> \frac{ (1-\varepsilon)^7}{16 \cdot (7!)^2}.
%\end{align*}
We may set, say, 
$c'_{\varepsilon} = \frac{(1 - \varepsilon )^{10} }{10^9}$, and then the assertion of the theorem holds.

\section{Proof of Theorem \ref{thm:TourFar}}

\noindent

	For proving Theorem \ref{thm:TourFar}, we will show that in a typical $T\sim T(n,p)$ one needs to reverse  $\Theta(\varepsilon^3)n$ arcs to make it 2-colorable. We assume by contradiction that there exist $c \varepsilon^3 n$ such edges (where $c$ is a small constant) and by reversing them we get two transitive color classes.  
	Our strategy is first to show, using Theorem \ref{thm:main2}, that for every 2-coloring there are many ``long" monochromatic backedges. Among them we find a large matching of long backedges all contained in the same color class. 
	Then, we claim that since this color class spans a transitive tournament, $T$ must in fact contain many more backedges, which stands in contradiction with the fact that \whp $T$ contains $\Theta(n)$ backedges.
%	Then, we claim that if in the subtournament spanned by these edges each color class spans a transitive tournament (or if this can be achieved by reversing few edges), then $T$ must in fact contain many more backedges, which stands in contradiction with the fact that \whp $T$ contains $\Theta(n)$ backedges.
%	we claim that the intersection between every large set of long monochromatic edges, with every set of edges whose reversal makes $T$ 2-colorable,  is empty. Therefore, the set of edges whose reversal makes $T$ 2-colorable has to be large enough. 
%	We do this by showing that reversing any $\Theta(\varepsilon^3)n$ arcs from the subtournament spanned by this color will not make it transitive.

We start with three claims that will be used in the proof of Theorem \ref{thm:TourFar}. Let $T$ be a tournament on $[n]$. 
For $\alpha > 0$, we say that a backedge 
$e = \arc{vu} \in E(T)$ is $\alpha$-\textit{long} if 
$v - u \geq \alpha n$ and $\alpha$-\textit{short} otherwise. 
%Recall the definition of an $\alpha$-long/$\alpha$-short backedge from Section \ref{sec:notation}. 

\begin{claim}\label{claim:shortedges}
Let $T \sim T\left( n,\frac {1+\varepsilon}n \right)$ and let 
$\alpha = \alpha(n) = \omega(n^{-1/2})$. Then the following holds \whp
\begin{enumerate}
	\item The number of backedges in $T$ is
	$\left( \frac{1+\varepsilon}{2} + o(1) \right)n$.
	\item The number of $\alpha$-short backedges in $T$ is at most 
%	$\big( \alpha(1+\varepsilon) + o(\alpha(n)) \big)n$.
	$2\alpha(1+\varepsilon)n$.  
	\item Every vertex participates in at most $\log n$ backedges.
\end{enumerate}
%Let $\alpha > 0$ and let $T \sim T\left( n,\frac {1+\varepsilon}n \right)$. 
\end{claim}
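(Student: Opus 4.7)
The plan is to verify each of the three items using standard concentration for binomial random variables. In $T \sim T(n, \frac{1+\varepsilon}{n})$ each pair $\{i,j\}$ with $i<j$ independently gives a backedge with probability $p = (1+\varepsilon)/n$, so every count below is a sum of independent Bernoullis and Lemma \ref{Che} applies directly. No structural or combinatorial ingredient is needed.

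For item 1, the total number of backedges is distributed as $\Bin(\binom{n}{2}, p)$ with mean $\mu = \binom{n}{2}p = \frac{(1+\varepsilon)n}{2}(1 - o(1))$. Applying Lemma \ref{Che} with, e.g., $a = (\log n)^{-1/2}$ yields a deviation of $o(n)$ with probability $1 - o(1)$. For item 2, let $N$ count the pairs $(u,v)$ with $1 \le u < v \le n$ and $v - u < \alpha n$; summing over differences $d = 1, \ldots, \lfloor \alpha n \rfloor$ gives $N \le \alpha n^2$, so the number $Y$ of $\alpha$-short backedges is $\Bin(N, p)$ with mean at most $\alpha(1+\varepsilon)n$. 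Lemma \ref{Che} with $a = 1$ gives $\Pr[Y > 2\alpha(1+\varepsilon)n] \le \exp(-\alpha(1+\varepsilon)n/3)$. This is the unique place where the hypothesis $\alpha = \omega(n^{-1/2})$ is used: it makes $\mathbb{E}[Y] = \omega(n^{1/2})$ so that the Chernoff estimate is $o(1)$ even in the regime where $\alpha$ tends to zero.

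For item 3, the number of backedges incident to a fixed vertex $v$ is stochastically dominated by $\Bin(n-1, p)$ with mean at most $1+\varepsilon$. Using the crude tail bound $\Pr[\Bin(n, p) \ge k] \le \binom{n}{k}p^k \le \bigl(e(1+\varepsilon)/k\bigr)^k$ with $k = \log n$ gives probability $n^{-\omega(1)}$, and a union bound over the $n$ vertices finishes the job. There is no real obstacle to the claim; the only point worth flagging is the joint handling of the constant and vanishing regimes for $\alpha$ in item 2, where the hypothesis $\alpha = \omega(n^{-1/2})$ is precisely what keeps the binomial mean large enough for Chernoff to close.
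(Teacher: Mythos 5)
Your proposal is correct and follows essentially the same route as the paper: treat each item as a binomial tail bound, use stochastic domination by $\Bin(\alpha n^2, p)$ for the short backedges, and a crude $\binom{n}{k}p^k$ estimate plus union bound for the max degree. The only differences are cosmetic choices of the Chernoff parameter (the paper takes $a=n^{-1/3}$ in item 1, and effectively $a=1$ in item 2 as you do).
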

\begin{proof}
The number of backedges in $T$ is distributed as
$\Bin\left( \binom{n}{2},\frac{1+\varepsilon}{n} \right)$. Item $1$ follow from Chernoff's inequality (Lemma \ref{Che}) with, say, 
$a = \nolinebreak n^{-1/3}$.
%As for the number of $\alpha$-short backedges, 
Next, the total number of pairs $\{i,j\}$ such that $i < j < i + \alpha n$ is at most $\alpha n^2$. Thus, the number of $\alpha$-short backedges is stochastically dominated by a random variable with distribution $\Bin \left(\alpha n^2,\frac{1+\varepsilon}{n}\right)$. 
Items $2$ now follows from Chernoff's inequality (Lemma \ref{Che})
with parameter $a = \nolinebreak \alpha(1+\varepsilon)n$. Here we use the assumption that $\alpha(n) = \omega(n^{-1/2})$.  

We now prove Item $3$. For $i \in [n] = V(T)$, the number of backedges containing $i$ is distributed 
$\Bin\left( n - 1, \frac{1+\varepsilon}{n} \right)$. By the union bound, the probability that Item $3$ does not hold is at most
$$
n \binom{n-1}{ \log n} 
\left( \frac{1+\varepsilon}{n} \right)^{ \log n} \leq 
n \left( \frac{e(1+\varepsilon)}{ \log n} \right)^{ \log n} 
\leq n \cdot e^{-2\log n} = o(1). 
$$
In the rightmost inequality above we assumed that $n$ is large enough.
%By Chernoff's inequality (see Lemma \ref{Che}) we get 
%$$\Pr\left[ X > 2\alpha(1+\varepsilon) n \right] \leq 
%\Pr\left[ \Bin\left(\alpha n^2,\frac{1+\varepsilon}{n}\right)>2\alpha(1+\varepsilon)n \right] \leq e^{-\frac 13\alpha (1+\varepsilon)n}= o(1).$$
%\begin{equation*}
%\Pr\big[\left|B_{\alpha \text{-}short}\right|>2\alpha (1+\varepsilon) n\big]\leq 
%\Pr\left[
%\Bin\left(\alpha n^2,\frac{1+\varepsilon}{n}\right)>2\alpha(1+\varepsilon)n\right]
%\leq e^{-\frac{1}3\alpha(1+\varepsilon)n} ,
%\end{equation*}  
%as required. 
\end{proof}

%\begin{claim}
%Let $\alpha \in (0,1)$ and let 
%$G \sim G\left( n,\frac {1+\varepsilon}n \right)$. Then \whp every $E' \subseteq E(G)$ of size $|E'| = \alpha n$ contains a matching of size 
%$t = \frac{\alpha}{24}n$.
%\end{claim}
%\begin{proof}
%Suppose that $E' \subseteq E(G)$ is of size $|E'| \geq \alpha n$ and the largest matching contained in $E'$ is of size less than $t$. Then there is a set $C \subseteq V(G)$ of size less than $2 t$ such that $e \cap C \neq \emptyset$ for every $e \in E'$. By the union bound, the probability that there is a set of $2t-1$ vertices which touches at least $\alpha n$ edges is at most
%\begin{equation*}
%\binom{n}{2t-1}\binom{(2t-1) n}{\alpha n}\left( \frac{1+\varepsilon}{n} \right)^{\alpha n} \leq 
%\left( \frac{en}{2t} \right)^{2t} \left( \frac{2e(1+\varepsilon)t}{\alpha n} \right)^{\alpha n} =
%\left( \left( \frac{e}{2c\alpha} \right)^{2/c}
%2e(1+\varepsilon)c \right)^{\alpha n}
%\end{equation*}
%\end{proof}	

%\begin{claim}
%Let $T$ be a tournament with $V(T) = [n]$, let $\alpha \in (0,1)$ and let 
%$t,d > 0$ be integers. Suppose that 
%$e_1 =\arc{v_1u_1},\dots,e_{t}=\arc{v_{t}u_{t}} \in E(T)$ are $\alpha$-long backedges, that the subtournament induced by $\{u_1,v_1,\dots,u_t,v_t\}$ is transitive and that every $x \in \{u_1,v_1,\dots,u_t,v_t\}$ participates in at most $d$ backedges. Then $T$ contains at least 
%$\frac{1}{d}\binom{\alpha t + 1}{2}$ backedges. 
%\end{claim}

In the following two claims, by a matching we mean a set of vertex-disjoint edges (in a graph or a tournament). 
\begin{claim}\label{claim:backedges}
Let $T$ be a tournament with $V(T) = [n]$, let $\alpha \in (0,1)$ and let 
$t > 0$ be an integer. Suppose that 
$e_1 =\arc{v_1u_1},\dots,e_{t}=\arc{v_{t}u_{t}} \in E(T)$ are $\alpha$-long backedges, such that $\{e_1,\dots,e_t\}$ is a matching and  the subtournament induced by $\{u_1,v_1,\dots,u_t,v_t\}$ is transitive. Then $T$ contains at least $\binom{\alpha t + 1}{2}$ backedges. 
%Then there are at least $\binom{\alpha t + 1}{2}$ pairs $(i,j) \in [t] \times [t]$ such that $\arc{v_ju_i}$ is a backedge of $T$. 
\end{claim}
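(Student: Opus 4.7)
The plan is to combine a pigeonhole step on the intervals $[u_i,v_i]$ with a structural argument about transitive tournaments carrying a prescribed matching.

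\textbf{Step 1 (Restrict to a sub-matching straddling a common threshold).} Associate to each $e_i$ the interval $[u_i,v_i]\subseteq[n]$, of length at least $\alpha n$. Since
\[
\sum_{k=0}^{n-1} \bigl|\{\,i\in[t]:u_i\le k<v_i\,\}\bigr| \;=\; \sum_{i=1}^{t}(v_i-u_i) \;\ge\; \alpha t n,
\]
averaging yields an integer $k^*$ for which $u_i\le k^*<v_i$ holds for at least $\alpha t$ indices $i$. Let $I\subseteq[t]$ be the set of such indices, put $m:=|I|\ge\alpha t$, and set $U^*:=\{u_i:i\in I\}$ and $V^*:=\{v_i:i\in I\}$. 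Since $\{e_i\}_{i\in I}$ is a matching we have $|U^*|=|V^*|=m$ and $U^*\cap V^*=\emptyset$. Moreover, every $u\in U^*$ and $v\in V^*$ satisfy $u\le k^*<v$, so $u<v$ in the natural order of $[n]$.

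\textbf{Step 2 (Combinatorial lemma).} The subtournament on $V^*\cup U^*$ is transitive (as a subtournament of the transitive tournament on $\{u_1,v_1,\dots,u_t,v_t\}$), and $\{\arc{v_iu_i}\}_{i\in I}$ is a perfect matching between $V^*$ and $U^*$ in which each $v_i$ beats its partner $u_i$. I claim that any such configuration has at least $\binom{m+1}{2}$ edges oriented from $V^*$ to $U^*$. To see this, list the $2m$ vertices in the transitive order $w_1\succ\dots\succ w_{2m}$ and let $p_1<\dots<p_m$ be the positions of the $V^*$-vertices. If $p_k\ge 2k$ for some $k$, then among the first $p_k-1\ge 2k-1$ positions there are only $k-1$ members of $V^*$ and hence at least $k$ members of $U^*$; these $\ge k$ $U^*$-vertices would have to be matched to $V^*$-vertices placed earlier, of which there are only $k-1$, violating Hall's condition. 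Hence $p_k\le 2k-1$ for every $k$. The number of $U^*$-vertices appearing after the $k$-th $V^*$-vertex in the transitive order is $m-(p_k-k)\ge m-k+1$, and summing gives at least $\sum_{k=1}^{m}(m-k+1)=\binom{m+1}{2}$ edges oriented from $V^*$ to $U^*$.

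\textbf{Step 3 (Conclude).} By Step 1, each $V^*\to U^*$ edge $\arc{vu}$ has $v>u$ in the natural order and is therefore a backedge of $T$, yielding at least $\binom{m+1}{2}\ge\binom{\alpha t+1}{2}$ backedges in $T$. The main obstacle is the combinatorial lemma in Step 2: the trivial count records only the $m$ given backedges $\arc{v_iu_i}$, and it is precisely the interplay between transitivity and the prescribed matching that upgrades this to a quadratic lower bound (the extremal configuration is the "interleaved" transitive order $v_{\pi(1)}\succ u_{\sigma(1)}\succ v_{\pi(2)}\succ u_{\sigma(2)}\succ\dots$). The pigeonhole step, by contrast, is routine and uses only the $\alpha$-long hypothesis.
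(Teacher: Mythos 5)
Your proof is correct and follows the same overall strategy as the paper: an averaging (pigeonhole) step produces a threshold $k^*$ straddled by at least $\alpha t$ of the matching edges, and then transitivity is used to count many $V^*\to U^*$ arcs, all of which are backedges because $v>k^*\ge u$. The only genuine difference is in the counting lemma of your Step~2. The paper argues more directly: it orders only the lower endpoints $u_1,\dots,u_r$ by their transitive order, say $u_{\sigma(1)}\to u_{\sigma(2)}\to\dots\to u_{\sigma(r)}$, and then observes that $v_{\sigma(i)}\to u_{\sigma(i)}\to u_{\sigma(j)}$ for $j>i$ forces $v_{\sigma(i)}\to u_{\sigma(j)}$ for all $j\ge i$, giving the $\binom{r+1}{2}$ backedges as a staircase with no further work. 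Your argument instead interleaves all $2m$ endpoints into one transitive order, proves the positional bound $p_k\le 2k-1$ via a Hall-type contradiction using $v_i\succ u_i$, and then sums the number of $U^*$-vertices below each $V^*$-vertex. Both arguments are correct and yield the same bound; the paper's is shorter because it never needs to track the relative positions of $v$'s and $u$'s within a single merged order, while yours isolates a cleaner standalone combinatorial lemma (a quadratic lower bound on $V^*\to U^*$ arcs in any transitive tournament carrying a perfect matching oriented $V^*\to U^*$) that could be of independent use.
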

\begin{proof}
Sample $k \in [n]$ uniformly at random and let 
$F = \{e_i : u_i \leq k < v_i\}$. For every $1 \leq i \leq t$, since $e_i$ is $\alpha$-long we get $\Pr[e_i \in F] \geq \alpha$. Thus, $\mathbb{E}[|F|] \geq \alpha t$, implying that there is some 
$k \in [n]$ for which $|F| \geq \alpha t$. W.l.o.g.\ assume that $F = \{e_1,\dots,e_r\}$. Note that $v_j > k \geq u_i$ for every 
$1 \leq i,j \leq r$. Since by assumption $\{u_1,v_1,\dots,u_r,v_r\}$ spans a transitive tournament, there
%of $u_1,v_1,\dots,u_r,v_r$ with all edges pointing from smaller to greater. 
is an ordering $u_{\sigma(1)},\dots,u_{\sigma(r)}$ of $u_1,\dots,u_r$ such that $\arc{u_{\sigma(i)} u_{\sigma(j)}} \in E(T)$ for every 
$1 \leq i < j \leq r$. Since $\arc{v_{\sigma(i)} u_{\sigma(i)}} \in E(T)$ for every $1 \leq i \leq r$, we get from transitivity that 
$\arc{v_{\sigma(i)} u_{\sigma(j)}} \in E(T)$ for every 
$1 \leq i \leq j \leq r$. Since $v_j > u_i$, we get that $\arc{v_{\sigma(i)}u_{\sigma(j)}}$ is a backedge for every 
$1 \leq i \leq j \leq r$. Since $\{e_1,\dots,e_t\}$ is a matching, all these backedges are distinct, giving a total of 
$\binom{r+1}{2} \geq \binom{\alpha t + 1}{2}$ backedges, as required. 
\end{proof}

\begin{claim}\label{claim:matching}
Let $F$ be a set of edges (of a graph or a tournament) such that every vertex participates in at most $d$ of the edges in $F$. Then $F$ contains a matching of size at least $\frac{|F|}{d+1}$.
\end{claim}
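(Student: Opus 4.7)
The plan is to reduce the statement to a direct application of Vizing's theorem on edge colorings. First I would form the simple undirected graph $G$ whose edge set is the undirected projection of $F$: in the graph case this is just $F$ itself, and in the tournament case one simply forgets the orientation of each arc in $F$ (since distinct arcs in a tournament span distinct unordered pairs, this projection is a bijection onto its image). The hypothesis that every vertex lies in at most $d$ members of $F$ translates directly to $\Delta(G) \leq d$.

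Vizing's theorem then yields $\chi'(G) \leq \Delta(G) + 1 \leq d+1$, so there is a proper edge-coloring of $G$ using at most $d+1$ colors. Each color class is by definition a matching in $G$, and together they partition $F$ (identified with $E(G)$) into at most $d+1$ parts whose total size is $|F|$. By pigeonhole, some class has size at least $|F|/(d+1)$; lifting that class back to $F$ (the inverse of the projection above) gives the desired matching.

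I do not anticipate any real difficulty here, since Vizing's theorem is a classical tool and nothing further is needed. It is worth noting that a purely greedy argument — repeatedly select an edge and discard it together with every other edge sharing one of its endpoints — would only produce a matching of size $|F|/(2d-1)$, because each selection eliminates at most $2(d-1)$ additional edges; this is asymptotically weaker by roughly a factor of two for large $d$, which is precisely why invoking Vizing (rather than greediness) is convenient for obtaining the stated denominator $d+1$. In the intended application in Theorem \ref{thm:TourFar}, where by Claim \ref{claim:shortedges} one takes $d = \log n$, this sharper constant costs nothing and keeps bookkeeping clean.
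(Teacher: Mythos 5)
Your proof is correct and uses exactly the same argument as the paper: apply Vizing's theorem to get a proper edge-coloring with $d+1$ colors and take the largest color class. The extra remarks (handling the tournament-to-graph projection, comparing with the greedy bound) are fine but not essential; the core step matches the paper's verbatim.
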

\begin{proof}
By Vizing's theorem \cite{vizing64}, there is a proper edge-coloring of $F$ using $d+1$ colors. Since each color class is a matching, there must be a matching of size at least $\frac {|F|}{d+1}$.
\end{proof}

%\noindent We are now ready to prove Theorem \ref{thm:TourFar}. 

\begin{proof}[Proof of Theorem \ref{thm:TourFar}]
Let $\varepsilon>0$ and let $T\sim T(n,\frac {1+\varepsilon}n)$.   
Let $B$ be the backedge graph of $T$ (as defined in Section \ref{sec:notation}). Then 
$B \sim G\left(n,\frac {1+\varepsilon}n \right)$. The upper bound in the theorem, 
$\distT(T) \leq \nolinebreak O(\varepsilon^3) n$, follows immediately from Theorem \ref{thm:main2} and Observation \ref{obs:backedge_chromatic}. It remains to prove that \whp $\distT(T) \geq \Omega(\varepsilon^3)n$. By Theorem \ref{thm:main2}, \whp for every partition $[n] = V_1 \cup V_2$ there are at least $\delta n$ edges of $B$ in $E(V_1) \cup E(V_2)$, where 
$\delta = \Omega(\varepsilon^3)$. Assume that this event occurs and furthermore that the assertion of Claim \ref{claim:shortedges} holds, applied with parameter (say) $\alpha = n^{-1/6}$. These events occur \whp 
so from now on we argue deterministically and show that for every partition $[n] = V_1 \cup V_2$ of $T$ one must reverse at least 
$(\delta - o(1)) n = \Omega(\varepsilon^3)n$ edges to make both parts transitive. 

Fix a partition $[n] = V_1 \cup V_2$. By our assumption, 
$E(V_1) \cup E(V_2)$ contains at least $\delta n$ edges of $B$. By the definition of the backedge graph, each of these edges corresponds to a backedge of $T$. By Item 2 of Claim \ref{claim:shortedges}, the overall number of $\alpha$-short backedges is $o(n)$, implying that there is a set 
$E_0$ of size $|E_0| = (\delta - o(1))n$ of $\alpha$-long backedges, each contained in $V_1$ or $V_2$. Suppose by contradiction that there is a tournament $T'$, obtained from $T$ by reversing less than 
$|E_0| - 4n^{5/6}$ edges of $T$, in which $V_1$ and $V_2$ are transitive. 
%Setting 
%$t = n^{2/3}$, there are $2t$ edges in $F$ which were not reversed. WLOG there are $t$ such edges inside $V_1$, denoted 
%$e_1 =\arc{v_1u_1},\dots,e_{t}=\arc{v_{t}u_{t}}$.
Then there is a set $F \subseteq E_0$ of size $|F| \geq 4n^{5/6}$ such that none of the edges in $F$ was reversed. By Item 3 of Claim \ref{claim:shortedges}, every vertex in $T$ participates in at most $\log n$ backedges (of $T$). Since every edge in $F$ is a backedge of $T$, Claim \ref{claim:matching} gives a matching $M \subseteq F$ of size 
$|M| \geq \frac{|F|}{\log n + 1} \geq \frac{|F|}{2\log n} \geq \frac{2n^{5/6}}{\log n}$. W.l.o.g.\ at least half of the edges of $M$ are inside $V_1$. Denote these edges by 
$e_1 =\arc{v_1u_1},\dots,e_{t}=\arc{v_{t}u_{t}}$, where 
$t \geq \frac{n^{5/6}}{\log n}$. 
Then $e_1,\dots,e_t$ is a matching of $\alpha$-long backedges in $T'$, and the subtournament of $T'$ induced by $\{u_1,v_1,\dots,u_t,v_t\}$ is transitive, as it is contained in $V_1$. By Claim \ref{claim:backedges}, $T'$ contains at least 
$\binom{\alpha t}{2} \geq \binom{n^{2/3}/\log n}{2} = \omega(n)$ backedges. Since $T'$ and $T$ differ on $O(n)$ edges, $T$ must contain $\omega(n)$ backedges, in contradiction to Item $1$ of Claim \ref{claim:shortedges}. We conclude that one must reverse at least $|E_0| - 4n^{5/6} = (\delta - o(1))n$ edges to make $V_1$ and $V_2$ transitive, as required. 
\end{proof}

%\begin{remark}\label{re:tourFar}
%	\emph{
%	Our arguments can be improved to show that a typical $T \sim T\left( n, \frac{1+\varepsilon}{n} \right)$ satisfies $\distT(T) \geq \Omega\mathopen{} \left( \frac{\varepsilon^3}{\log(1/\varepsilon)} \right)\mathclose{}n$. 
%	In the proof of Theorem \ref{thm:TourFar} we use the fact that in any 2-coloring of a typical $T \sim T\left( n, \frac{1+\varepsilon}{n} \right)$ there are at least $\Omega(\varepsilon^3)n$ monochromatic backedges. We deduce this from Theorem \ref{thm:main2}, which says that in any 2-coloring of a random $G \sim G\left( n, \frac{1+\varepsilon}{n} \right)$ there are at least $\Omega(\varepsilon^3)n$ monochromatic edges. In the proof of Theorem \ref{thm:main2} we find $\Omega(\varepsilon^3)n$ monochromatic edges, each lying on a different path that replaces an edge from the kernel. If we look now only at edges in the kernel with both endpoints of degree at most $C$ (for a large enough constant $C$), we will be able to find $\Omega(\varepsilon^3)n$ monochromatic edges, each lying on a different path that replaces an edge in the kernel with both endpoints of degree at most $C$ (since the number of vertices of degree more than $C$ is negligble). Then, in the proof of Theorem \ref{thm:TourFar}, we can look only at these edges (we have $\Omega(\varepsilon^3)n$ of them) and $C$ from \ref{claim:largeDegInR} becomes of constant size. This saves us a factor of $\log(1/\varepsilon)$ in Case I and a factor of $\log^2(1/\varepsilon)$ in Case II. 
%}
%\end{remark}

\section{Proof of Theorem \ref{prop:kColorabilityOfRandomTournaments}}

\noindent
We will use the following fact in the proof of Theorem \ref{prop:kColorabilityOfRandomTournaments}.

\begin{lemma}\label{lem:STS}[Theorem 2 from \cite{STS}]
	The largest collection of edge-disjoint triangles in $K_n$ is of size at least $\frac {n^2}6-\frac n3$.
\end{lemma}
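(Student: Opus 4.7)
My plan is to prove this classical result on triangle packings via Kirkman's theorem on Steiner triple systems (STS), combined with a reduction depending on the residue of $n$ modulo $6$.

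First I would dispose of the residues $n \equiv 1, 3 \pmod 6$: by Kirkman's theorem, a Steiner triple system on $n$ vertices exists, which partitions $E(K_n)$ into $\binom{n}{2}/3 = \frac{n(n-1)}{6}$ edge-disjoint triangles. Since $\frac{n(n-1)}{6} \ge \frac{n(n-2)}{6} = \frac{n^2}{6} - \frac{n}{3}$, we are done in this case and actually have the stronger bound $\frac{n(n-1)}{6}$.

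Next, for $n \equiv 0, 2 \pmod 6$ I would reduce to the previous case. Here $n+1 \equiv 1, 3 \pmod 6$, so Kirkman's theorem yields an STS on $n+1$ points with $\frac{n(n+1)}{6}$ triples. Fix any vertex $v$ of this STS; since every pair of vertices lies in exactly one triple, $v$ lies in exactly $n/2$ triples. Deleting those $n/2$ triples yields an edge-disjoint triangle packing inside $K_n$ (supported on the remaining $n$ vertices) of size exactly $\frac{n(n+1)}{6} - \frac{n}{2} = \frac{n(n-2)}{6}$, which matches the target.

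For the remaining residues $n \equiv 4, 5 \pmod 6$, the plan is to start with a Steiner triple system on a slightly smaller vertex set (of size $n-1$ or $n-2$, whichever is $\equiv 1$ or $3 \pmod 6$) and then augment it with additional triangles through the one or two ``extra'' vertices $r_1$ (and $r_2$). The key technical step is a \emph{triangle-swap}: one removes a carefully chosen collection of triples from the initial STS so as to free up edges within the STS vertex set, and then recombines those freed edges with the edges incident to the extra vertices to create a larger set of new triangles $\{r_i,u,v\}$. The main obstacle is purely combinatorial bookkeeping: each swap must yield a net gain, which forces the freed edges to admit a large matching whose endpoints can be paired with distinct edges at the $r_i$'s. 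By choosing the swapped triples so that their freed edges form a suitable near-matching across vertex-disjoint triples and assigning them to the $r_i$'s in a balanced way, one verifies that the total packing has size at least $\frac{n^2}{6} - \frac{n}{3}$; this is the technical heart of the proof and is standard in the classical literature on maximum triangle packings and the structure of their ``leave'' graphs, which is the content of the cited source.
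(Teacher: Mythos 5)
The paper does not prove this lemma; it only cites it as Theorem~2 of Feder--Subi, so there is no in-paper argument to compare against, and I assess your attempt on its own. Your treatment of the residues $n \equiv 0,1,2,3 \pmod 6$ is correct and pleasantly clean: Kirkman's theorem gives a decomposition of size $\frac{n(n-1)}{6}$ for $n\equiv 1,3$, and for $n\equiv 0,2$ deleting a fixed vertex from an STS on $n+1$ points together with its $\frac{n}{2}$ incident triples leaves exactly $\frac{n(n+1)}{6}-\frac{n}{2}=\frac{n(n-2)}{6}$ triangles inside $K_n$, matching the target.

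The residues $n\equiv 4,5\pmod 6$, however, are only sketched; the ``triangle swap'' you describe is exactly the nontrivial content, and deferring the bookkeeping leaves a genuine gap. Moreover, be aware that the inequality as literally written cannot hold for $n\equiv 4\pmod 6$ (nor for $n=5$): when $n\equiv 4\pmod 6$ every vertex has odd leave-degree, so the leave has at least $n/2$ edges, and since the leave size must be $\equiv \binom{n}{2}\equiv 0\pmod 3$ while $n/2\equiv 2\pmod 3$, the leave has at least $n/2+1$ edges, forcing the maximum packing to be $\frac{n^2-2n-2}{6}=\big\lfloor \frac{n^2}{6}-\frac{n}{3}\big\rfloor$, which is strictly smaller than $\frac{n^2}{6}-\frac{n}{3}$. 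So the cited theorem must carry a floor or a small additive correction, and any proof of the statement for these residues would have to contend with that. This $O(1)$ discrepancy is harmless for the paper (which only needs $|S|\geq n_0^2/7$, with plenty of slack), but a complete proof of the lemma requires both fixing the statement for $n\equiv 4\pmod 6$ and actually carrying out the augmentation argument rather than asserting that the bookkeeping works out.
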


Let $k\geq 3$ and let $C=C(k)$ to be chosen later. Let $T\sim T(n,\frac {C}n)$. If $T$ is $k$-colorable, then there exists a transitive sub-tournament of size at least $\frac nk $. We will show that \whp this is not the case. 
Let $T'$ be a fixed sub-tournament on $n_0=\frac nk$ vertices (keeping the order of the vertices of $T$). By Lemma \ref{lem:STS}, there is a set of size $\frac {n_0^2}7$ of triples of vertices $S=\{\{x_i,y_i,z_i\}\}_{i=1}^{{n_0^2}/7}$ in $T'$ such that for every $i\neq j$ we have $|\{x_i,y_i,z_i\}\cap \{x_j,y_j,z_j\}|\leq 1$.
If $T'$ is transitive, then every triangle from $S$ has to be transitive.  Moreover, the probability that a triangle on a vertex set $\{x,y,z\}\in S$ is transitive is $1-(1-p)^2p-(1-p)p^2\leq 1-\frac {0.9C}n$. Therefore, 
$$\Pr(T'\text{ is transitive})\leq \Pr(\text{every triangle in $S$ is transitive})\leq \left(1-\frac {0.9C}n\right)^{|S|}.$$

Now, using the union bound, the probability that there exists a transitive sub-tournament of size $n_0$ is at most
\begin{align*}
\binom{n}{n_0}\left(1-\frac {0.9C}n\right)^{|S|} &\leq \left(k\cdot e^{-\frac {0.9C}n|S|/n_0}\right)^{n/k}\\
&\leq \left(k\cdot e^{-(\frac {0.9C}n)n_0/7}\right)^{n/k}\\
&= \left(k\cdot e^{-\frac 17\frac {0.9C}k}\right)^{n/k}.
\end{align*}

Thus, if we set $C=8k\log k$ (and thus $p=8k\log k/n$) we have that the probability that there exists a transitive sub-tournament of size $n_0$ is $o(1)$ and therefore \whp $T$ is not $k$-colorable. 

For the lower bound, set $c(k)<2(k-1)\log(k-1)$, let $p \leq \frac {c(k)}n$ and consider 
$T \sim T(n,p)$. Let $B$ be the backedge of $T$. It is known (see, e.g., \cite{AchNaor04}) that for $c(k)$ and $p$ as above, $B \sim G(n,p)$ is $k$-colorable \whp By Observation \ref{obs:backedge_chromatic} we get $\Pr[\chi(T) \leq k] \geq \Pr[\chi(B) \leq k] = 1 - o(1)$. 

\section{Concluding Remarks and Open Problems}
%In this paper we dealt with two seemingly unrelated topics: homomorphisms of sparse random graphs and coloring properties of biased random tournaments. We attacked both problems using an upper bound on the maximum cut in sparse random graphs. We used a deep theorem by Ding, Lubetzky and Peres \cite{DLP} to improve on previous known upper bounds and obtained a sharp upper bound. 

In this paper, we used a deep theorem by Ding, Lubetzky and Peres \cite{DLP} to obtain a sharp upper bound for the maximum cut in sparse random graphs. More precisely, we proved that the maximum cut a typical $G \sim G\left( n, \frac{1+\varepsilon}{n} \right)$ is of size $\Theta(\varepsilon^3)n$. We actually proved that \whp the MAXCUT is at most $(\varepsilon^3 + O(\varepsilon^4)) n$, and with additional arguments, this upper bound can be improved to $(\frac 13\varepsilon^3 + O(\varepsilon^4)) n$. We wonder if $\frac{1}{3}$ is indeed the correct constant.   

Using the aforementioned result on MAXCUT, we immediately obtained a solution to a conjecture of Frieze and Pegden. The conjecture stated that \whp the random graph 
$G \sim G\left( n, \frac{1+\varepsilon}{n} \right)$ does not have a homomorphism into a sufficiently large odd cycle. We actually showed for $\ell = \Theta\left( \varepsilon^{-3} \right)$, there is no homomorphism from a typical $G\left( n, \frac{1+\varepsilon}{n} \right)$ to $C_{2\ell + 1}$. It would be interesting to find the minimal $\ell(\varepsilon)$ with this property. 

Regarding random tournaments, we investigated the typical chromatic number of $T \sim T(n,p)$ (the biased $p$-random tournament) for $p = \Theta\left( \frac{1}{n} \right)$. 
We showed that if $k \geq 3$, then there is a threshold for $k$-colorability of order $\frac{k\log k}{n}$. Moreover, there is a coarse threshold for $2$-colorability: if $p = \frac{c}{n}$ for 
$c < 1$, then the probability of being 2-colorable is bounded away from $0$ and $1$; if $p = \frac{c}{n}$ for $c > 1$ then a typical $T(n,p)$ is not $2$-colorable. In fact, we proved something stronger: for $p = \frac{1+\varepsilon}{n}$, a typical $T \sim T\left(n, p\right)$ satisfies 
$\distT(T) = \Theta(\varepsilon^3)n$. 
This shows that the threshold for $2$-colorability of random tournaments is the same as that of random graphs. 
It would be interesting to determine the threshold for $k$-colorability of random tournaments for $k \geq 3$. Using the same method as in the proof of Theorem \ref{thm:TourFar}, one can show that if \whp the distance of $G \sim G(n,\frac{c}{n})$ from $k$-colorability is at least $C(k)n^{3/4}\log^{1/2}n$ (for some appropriate $C(k)$) then \whp $T \sim T(n,p)$ is not $k$-colorable. This leads us to conjecture that the threshold for $k$-colorability of tournaments is the same as that of graphs. 

%In particular, if $k \geq 3$, then there is a threshold for $k$-colorability of order $\frac{k\log k}{n}$ (and it would be nice to find the exact threshold function, i.e.\ to calculate the ``right" multiplicative constant before $k\log k$). On the other hand, there is a coarse threshold for $2$-colorability: if $p = \frac{c}{n}$ for $c < 1$, then the probability of being 2-colorable is bounded away from $0$ and $1$; if $p = \frac{c}{n}$ for $c > 1$ then a typical $T(n,p)$ is not $2$-colorable. In fact, we proved something stronger: for $p = \frac{1+\varepsilon}{n}$, a typical $T \sim T\left(n, p\right)$ satisfies 
%$\distT(T) = \Theta(\varepsilon^3)n$. 
%On the other hand, $\distT(T) \leq \dist(R)$, where $R$ is the backedge graph of $T$ (see Section \ref{sec:notation}). We showed (see Theorem \ref{thm:main2} and Remark \ref{re:algo}) that the typical distance of $R \sim G(n,p)$ from bipartiteness is $\Theta(\varepsilon^3)n$. Hence we have the upper bound $\distT(T) \leq \Theta(\varepsilon^3)n$. It would be interesting to close the gap between the lower and upper bounds for $\distT(T)$.

% % % % % % % % % % % % % % % % % % % % % % % % % % % % % % % % %
\subsection*{Acknowledgements}
\noindent
The authors would like to thank the anonymous referees for their careful reading and  helpful remarks.

\end{document}